\numberwithin{equation}{section}
\newcommand\reallywidehat[1]{\arraycolsep=0pt\relax%
\begin{array}{c}
\stretchto{
  \scaleto{
        \scalerel*[\widthof{\ensuremath{#1}}]{\kern-.5pt\bigwedge\kern-.5pt}
    {\rule[-\textheight/2]{1ex}{\textheight}} 
  }{\textheight} %
}{0.5ex}\\           
#1\\                 
\rule{-1ex}{0ex}
\end{array}
}             
\theoremstyle{plain}
\newtheorem{thm}{Theorem}[section]
\newtheorem{lem}[thm]{Lemma}
\newtheorem*{cor}{Corollary}
\theoremstyle{definition}
\theoremstyle{remark}
\newtheorem*{rmk}{Remark}
\newtheorem{eg}{Example}
\newcommand{\R}{\mathbb{R}}
\newcommand{\A}{\mathcal{A}}
\newcommand{\rectFunc}{\text{rect}}
\title{An explicit numerical algorithm to the solution of Volterra integral equation of the second kind
}
\author{Leanne Dong\\ 
Behavioural Data Science Group\\
Faculty of Engineering and IT, The University of Technology Sydney \\
Ultimo NSW 2007, Australia\\
leanne.dong@uts.edu.au\\
\and\\
John van der Hoek\\
School of Mathematics and Statistics, The University of South Australia\\
John.vanderHoek@unisa.edu.au
}
\date{\today}
\begin{document}
	\maketitle

\begin{abstract}
This paper considers a numeric algorithm to solve the equation
\begin{align*}
    y(t)=f(t)+\int^t_0 g(t-\tau)y(\tau)\,d\tau
\end{align*}
with a kernel $g$ and input $f$ for $y$. In some applications we have a smooth integrable kernel but the input $f$ could be a generalised function, which could involve the Dirac distribution. We call the case when $f=\delta$, the Dirac distribution centred at 0, the fundamental solution $E$, and show that $E=\delta+h$ where $h$ is integrable and solve
\begin{align*}
    h(t)=g(t)+\int^t_0 g(t-\tau)h(\tau)\,d\tau
\end{align*}
The solution of the general case is then
\begin{align*}
    y(t)=f(t)+(h*f)(t)
\end{align*}
which involves the convolution of $h$ and $f$. We can approximate $g$ to desired accuracy with piecewise constant kernel for which the solution $h$ is known explicitly. We supply an algorithm for the solution of the integral equation with specified accuracy.
\end{abstract}

\section{Volterra Integral Equation of the Second Kind}

Applications of Hawkes process in various grounds, such as in quantitative finance and machine learning (See for instance \cite{Daley:2003,Rizoiu2017,dassios:2011}) requires one to study Volterra Equation of the second kind when $f=\delta$, where $\delta$ is the Dirac distribution centred at $0$. We call the solution $y(t)$ the fundamental solution of the second order Volterra equation. Using the theory of distributions by Schwartz. One can show that the fundamental solution have the form $\delta+h$ where $h$ is a $L^1$ function solving the equation $h=g+h*g$. It also follow from this study that the general solution of volterra equation is $y=f+h*f$ and this convolution is well defined for many examples used in the studies of Hawkes.

\begin{align*}
    y(t)&=f(t)+\left\{h*(\sum^N_{i=1}w_i\delta(\cdot-t_i)+f_1)\right\}(t)\notag\\
    &=f(t)+\sum^N_{i=1}w_i h(t_i)+(h*f_1)(t)
\end{align*}
We seek solution of 

\begin{align}\label{eq1}
    y(t)=f(t)+\int^t_0 y(\tau)g(t-\tau)\,d\tau,\quad\text{for}\quad t\ge 0
\end{align}
In the Hawkes's setup,
\begin{align}\label{eq2}
    g(t)=k\varphi(t)
\end{align}
where $\varphi$ is $L^1_{+}(\R)$ with norm $\|\cdot\|_1$, that is $\varphi(t)\ge 0$ if $t\ge 0$ and $\varphi(t)=0$ if $t<0$ and $\int^{\infty}_0 \varphi(t)\,dt=1$, $0<k<1$. We seek $h$

\begin{align}\label{eq3}
    h(t)=g(t)+\int^t_0 h(\tau) g(t-\tau)\,d\tau,\quad t\ge 0
\end{align}
In fact
\begin{align}
    h(t)&=g(t)+(g*g)(t)+(g*g*g)(t)+\cdots\notag\\
    &=\sum^{\infty}_{n=1}g^{\otimes n}(t)
\end{align}
where $g^{\otimes n}(t)=g\otimes g\otimes g\cdots\otimes g$ is $n$-fold convolution, $g^{\otimes 1}=g.$

\begin{rmk}
\begin{itemize}
    \item The $n$-fold convolution always exists in $L^1_{+}(\R)$ and
    \begin{align}
        \|g^{\otimes n}\|_{L^1_{+}(\R)}=k^n\|\varphi\|^n_{L^1_{+}(\R)}
    \end{align}
    \item In very few cases analytic expressions for $h$ are available. If $\varphi(t)=\theta e^{-\theta t}$ for $t\ge 0$, then $h(t)=k\theta e^{-(1-k)\theta t}$ for $t\ge 0$. Analytic expressions are also available for 
    
    \begin{align*}
        \varphi(t)=\frac{\beta^{\alpha}}{\Gamma(\alpha)}t^{\alpha-1}e^{-\beta t}\quad\text{for}\quad t > 0, \,\,\alpha>0,\,\,\beta>0
    \end{align*}
    
    \begin{align*}
    \varphi(t)&=
        \begin{cases}
        1 & 0\le t<1\\
        0 & t\ge 1
        \end{cases}
    \end{align*}
    For other choices, numerical procedure are needed.
    
    \item If $g_a$ is an approximation of $g$ and $\|g\|_1$, $\|g_a\|_1\le k<1$, then
    \begin{align}\label{eq6}
        \|h-h_a\|_1\le \frac{1}{(1-k)^2}\|g-g_a\|_1
    \end{align}
     \begin{align}\label{eq7}
        \|h-h_a\|_{\infty}\le \frac{1}{(1-k)^2}\|g-g_a\|_{\infty}
    \end{align}   
where $\|h\|_1=\int^{\infty}_0|h(t)|\,dt$ and $\|h\|_{\infty}=\sup_{t\ge 0}|h(t)|$. The proofs of (\ref{eq6}) and (\ref{eq7}) are in appendix.
\end{itemize}
\end{rmk}
The key result here would be to find $h$ explicitly where 
\begin{align}\label{kernelgb}
    g(t)&=
\begin{cases}
\beta_j& j\delta\le t<(j+1)\delta\\
0 & t<0
\end{cases}
\end{align}
and

\begin{align}\label{intkernelgb}
    \int^{\infty}_0|g(t)|\,dt=\delta\sum^{\infty}_{j=0}|\beta_j|<1
\end{align}
In our explicit calculation here, the $\beta_j\ge 0$. We provide an algorithm for $h$, which involves no approximation.
    
\section{Part I : \textit{Special Case}} 
We study the case
\begin{align}\label{intkernelga}
	g(t)&=
\begin{cases}
	\alpha_j, &\text{ if }j\le t<j+1\\
	0, &\text{  }t<0
\end{cases}
\end{align}
and then derive the case in (\ref{intkernelgb}) from it. 

Define

\begin{align}\label{rect}
\text{rect}(t)&=
\begin{cases}
	1, & 0\le t<1\\
	0, & t\ge 1, t<0
\end{cases}	
\end{align}

\begin{align}\label{Lf}
	(Lf)(t)=f(t-1)
\end{align}
when $f\in L^1_{+}(\R)$. In (\ref{Lf}) we note that $(Lf)(t)=0$ if $t<1$. We use the Laplace transform
\begin{align}\label{ftrect}
	\widehat{\text{rect}}(s)&=\int^{\infty}_0 e^{-st}\text{rect}(t)\,dt\notag\\
	&=\frac{1-e^{-s}}{s}\quad (s>0)   
\end{align}

\begin{align}\label{eq14}
	\widehat{Lf}(s)&=\int^{\infty}_0 Lf(t)e^{-st}\,dt\notag\\
	&=\int^{\infty}_0 f(t-1)e^{-st}\,dt\notag\\
	&=\int^{\infty}_1 f(t-1)e^{-st}\,dt\notag\\
	&=\int^{\infty}_0 f(u)e^{-s(u+1)}\,du\notag\\
	&= e^{-s}\hat{f}(s)\notag\\
	&=\widehat{M}_L(s)\hat{f}(s)
\end{align}
where $\widehat{M}_L(s)$ is called a multiplier for obvious reasons. Let us define
\begin{align}
	\A=\sum^{\infty}_{j=0}a_j L^j
\end{align}
where $L^j=L\circ L\circ\cdots\circ L$ ($j$ times) and $L^j f(t)=f(t-j)$ for $t\ge 0$. So
\begin{align}
	\A f(t)=\sum^{\infty}_{j=0}a_j f(t-j)
\end{align}

\begin{align}
	\widehat{\A f}(s)&=\left(\sum^{\infty}_{j=0}a_j e^{-js}\right)\hat{f}(s)\notag\\
	&=\widehat{M}_{\A}(s)\hat{f}(s)
\end{align}
where $\widehat{M}_{\A}$ is the multiple of $\A$.

Define

\begin{align}\label{Delta}
	\Delta=I-L
\end{align}
So
\begin{align*}
	(\Delta f)(t)=f(t)-f(t-1)
\end{align*}
and $\widehat{\Delta}f(s)=(1-e^{-s})\hat{f}(s)=\widehat{M}_{\Delta}(s)\hat{f}(s)$.

Define
\begin{align}
	Jf(t)=\int^t_0 f(\tau)\,d\tau
\end{align}
Hence
\begin{align*}
	\widehat{Jf}(s)=\frac{1}{s}\hat{f}(s)=\widehat{M}_J(s)\hat{f}(s)
\end{align*}

\begin{lem}
	\begin{align}
		g= \A \rectFunc
	\end{align}
\end{lem}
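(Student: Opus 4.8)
The plan is to compute $\A\,\rectFunc$ directly from the definitions of the shift operator and of $\rectFunc$, and then match the result against the piecewise-constant $g$ of \eqref{intkernelga}, with the understanding that the coefficients of $\A$ are identified as $a_j=\alpha_j$. Everything reduces to noticing that the shifted blocks $\rectFunc(t-j)$ have pairwise disjoint supports.

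First I would expand, using $L^j f(t)=f(t-j)$,
$\A\,\rectFunc(t)=\sum_{j=0}^{\infty}a_j\,(L^j\rectFunc)(t)=\sum_{j=0}^{\infty}a_j\,\rectFunc(t-j)$.
By \eqref{rect}, the function $\rectFunc(t-j)$ is exactly the indicator of the half-open interval $[j,j+1)$: it equals $1$ when $0\le t-j<1$, i.e. $j\le t<j+1$, and vanishes otherwise. Since the intervals $\{[j,j+1)\}_{j\ge 0}$ are pairwise disjoint and cover $[0,\infty)$, for each fixed $t\ge 0$ there is a unique index $j$ with $j\le t<j+1$, and only that single term of the series survives. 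Hence $\A\,\rectFunc(t)=a_j$ on $[j,j+1)$ and $\A\,\rectFunc(t)=0$ for $t<0$, which coincides with $g(t)=\alpha_j$ on $[j,j+1)$ exactly when $a_j=\alpha_j$.

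As an independent check, consistent with the multiplier formalism already developed, I would take Laplace transforms. Using \eqref{ftrect} and the multiplier identity for $\A$, we get $\widehat{\A\,\rectFunc}(s)=\widehat{M}_{\A}(s)\,\widehat{\rectFunc}(s)=\bigl(\sum_{j=0}^{\infty}a_j e^{-js}\bigr)\frac{1-e^{-s}}{s}$, whereas a termwise integration of $g$ over each $[j,j+1)$ gives $\hat g(s)=\bigl(\sum_{j=0}^{\infty}\alpha_j e^{-js}\bigr)\frac{1-e^{-s}}{s}$; uniqueness of the transform (equivalently, comparing the coefficients of the Dirichlet-type series in $e^{-js}$) again forces $a_j=\alpha_j$.

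Because the series collapses to a single nonzero term at each point, there is essentially no analytic obstacle here; the lemma is a bookkeeping statement that $\A$ reproduces $g$ from the single generating block $\rectFunc$. The only points needing care are the half-open convention at the interval endpoints, so that the cover of $[0,\infty)$ is an exact partition with no double-counting, and the explicit identification $a_j=\alpha_j$ of the operator coefficients with the step heights of $g$. The Laplace-transform route is marginally more delicate in that it requires justifying the interchange of summation and integration, but for the $L^1_{+}(\R)$ kernels under consideration this follows from the summability built into \eqref{intkernelgb}.
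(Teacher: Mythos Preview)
Your proof is correct and follows exactly the paper's approach: expand $\A\,\rectFunc(t)=\sum_{j\ge 0}a_j\,\rectFunc(t-j)$ and observe that $\rectFunc(t-j)$ is the indicator of $[j,j+1)$, so the sum reproduces the piecewise-constant $g$ with $a_j=\alpha_j$. The Laplace-transform verification you append is a correct but superfluous addition; the paper's proof stops at the direct computation.
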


\begin{proof}
\begin{align*}
&\	\A \rectFunc(t)\\
= &\ \sum^{\infty}_{j=0}a_j (L^j\rectFunc)(t)\\
= &\ \sum^{\infty}_{j=1}a_j\rectFunc(t-j)
\end{align*}
and \begin{align*}
	\rectFunc(t-j)&=
	\begin{cases}
		1, &\text{ if }j\le t<j+1\\
		0, &\text{ otherwise }
	\end{cases}
\end{align*}
\end{proof}
We now compute $g\otimes\cdots\otimes g=g^{\otimes n}$: we note that

\begin{align}
	\widehat{g^{\otimes n}}(s)&=\hat{g}(s)^n\notag\\
	&=\widehat{M}_{\A}(s)^n\widehat{\rectFunc}(s)^n
\end{align}
becomes
\begin{align}
	\hat{g}(s)&=\widehat{M}_{\A}(s)\widehat{\rectFunc}(s)
\end{align}
and
\begin{align*}
	\widehat{\rectFunc}(s)&=\frac{1-e^{-s}}{s}=\widehat{M}_J(s)\widehat{M}_{\Delta}(s)
\end{align*}

\begin{lem}\label{lem2}
	Let 
\begin{align*}
	f(t)&=(t-a)^{\gamma}_{+}\\
	&=
\begin{cases}
	(t-a)^{\gamma}, &\text{ if }t\ge a\\
	0, &\text{ Otherwise}
\end{cases}
\end{align*}
then 
\begin{align}
	\hat{f}(s)&=\frac{\Gamma(\gamma+1)}{s^{\gamma+1}}e^{-sa}
\end{align}
\end{lem}

\begin{proof}
	We have
	\begin{align*}
		\hat{f}(s)&=\int^{\infty}_0 f(t)e^{-st}\,dt\qquad s>0\\
		&=\int^{\infty}_a (t-a)^{\gamma}e^{-st}\,dt\\
		&=\int^{\infty}_0 u^{\gamma}e^{-s(u+a)}\,du\quad u=t-a\\
		&=e^{-sa}\int^{\infty}_0 u^{\gamma}e^{-su}\,du\\
		&=\frac{e^{-sa}\Gamma(\gamma+1)}{s^{\gamma+1}}\quad s>0
	\end{align*}
\end{proof}
where $\Gamma$ is the usual Gamma function, namely $\Gamma(x)=\int^{\infty}_0 t^{x-1}e^{-t}dt$ for $x>0$. Using binomial expansion $(a+b)^n=a^n+\binom{n}{1}a^{n-1}b+\binom{n}{2}a^{n-2}b^2+\cdots+b^n$, we found that 

\begin{align*}
	\widehat{\rectFunc}(s)^n&=\frac{1}{s^n}(1-e^{-s})^n\\
	&=\frac{1}{s^n}\sum^n_{r=0}\binom{n}{r}(-1)^r e^{-rs}\\
	&=\sum^n_{r=0}(-1)^r\binom{n}{r}\left(\frac{1}{s^n}e^{-rs}\right)\\
	&=\sum^n_{r=0}(-1)^r\binom{n}{r}\frac{1}{(n-1)!}\left(\frac{(n-1)!}{s^n}e^{-rs}\right)
\end{align*}
This leads to:
\begin{lem}
\begin{align}\label{nconvrect}
&\	\rectFunc*\rectFunc*\cdots*\rectFunc(t)\notag\\
= &\ \sum^{n}_{r=0}(-1)^r\binom{n}{r}\frac{1}{(n-1)!}(t-r)^{n-1}_+
\end{align}
for $n\ge 1$
\end{lem}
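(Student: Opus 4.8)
The plan is to prove the identity by Laplace transform inversion, reading off the right-hand side directly from the expansion of $\widehat{\rectFunc}(s)^n$ computed just above the statement. First I would invoke the convolution theorem: since each factor $\rectFunc$ lies in $L^1_{+}(\R)$ and the convolution of $L^1_{+}$ functions is again in $L^1_{+}(\R)$, the Laplace transform of the $n$-fold convolution is the product of the individual transforms, namely
\begin{align*}
	\widehat{\rectFunc*\rectFunc*\cdots*\rectFunc}(s)=\widehat{\rectFunc}(s)^n.
\end{align*}

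Next I would use the binomial expansion already derived, which writes this transform as the \emph{finite} sum
\begin{align*}
	\widehat{\rectFunc}(s)^n=\sum^{n}_{r=0}(-1)^r\binom{n}{r}\frac{1}{(n-1)!}\left(\frac{(n-1)!}{s^n}e^{-rs}\right).
\end{align*}
The crucial observation is that each bracketed factor is itself a Laplace transform we have already computed: applying Lemma \ref{lem2} with $\gamma=n-1$ and $a=r$ gives $\widehat{(t-r)^{n-1}_{+}}(s)=\frac{\Gamma(n)}{s^{n}}e^{-rs}=\frac{(n-1)!}{s^{n}}e^{-rs}$. Substituting this into the sum and using linearity of the transform shows that $\widehat{\rectFunc}(s)^n$ equals the Laplace transform of $\sum_{r=0}^{n}(-1)^r\binom{n}{r}\frac{1}{(n-1)!}(t-r)^{n-1}_{+}$.

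Finally I would conclude by uniqueness of the Laplace transform on $L^1_{+}(\R)$: since the $n$-fold convolution and the proposed right-hand side have identical transforms for all $s>0$ and both lie in $L^1_{+}(\R)$, they coincide almost everywhere, and in fact everywhere (the convolution is continuous for $n\ge 2$, while the case $n=1$ reduces to the definition of $\rectFunc$, since the sum collapses to $\mathbf{1}_{t\ge 0}-\mathbf{1}_{t\ge 1}$). The only genuinely delicate points are bookkeeping ones: the finiteness of the sum means no convergence issue arises in the term-by-term inversion, and linearity lets me move the sum past the transform freely. I expect the main obstacle, such as it is, to be purely notational, namely matching the factor $\Gamma(n)=(n-1)!$ supplied by Lemma \ref{lem2} against the $\frac{1}{(n-1)!}$ normalisation so that the constants cancel correctly; there is no analytic difficulty once the convolution theorem and Lemma \ref{lem2} are in hand.
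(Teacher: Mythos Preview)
Your proposal is correct and follows exactly the route the paper intends: the paper's own proof is the single line ``This is an immediate result from our previous calculation,'' and your write-up simply makes explicit the term-by-term inversion via Lemma~\ref{lem2} and the uniqueness of the Laplace transform that this sentence is meant to summarise.
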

\begin{proof}
	This is an immediate result from our previous calculation.
\end{proof}
Here are some graphics for $n=1$, $2$ and $3$.
\begin{figure}[hbt!]
  \hspace*{-4cm}\includegraphics[width=100mm]{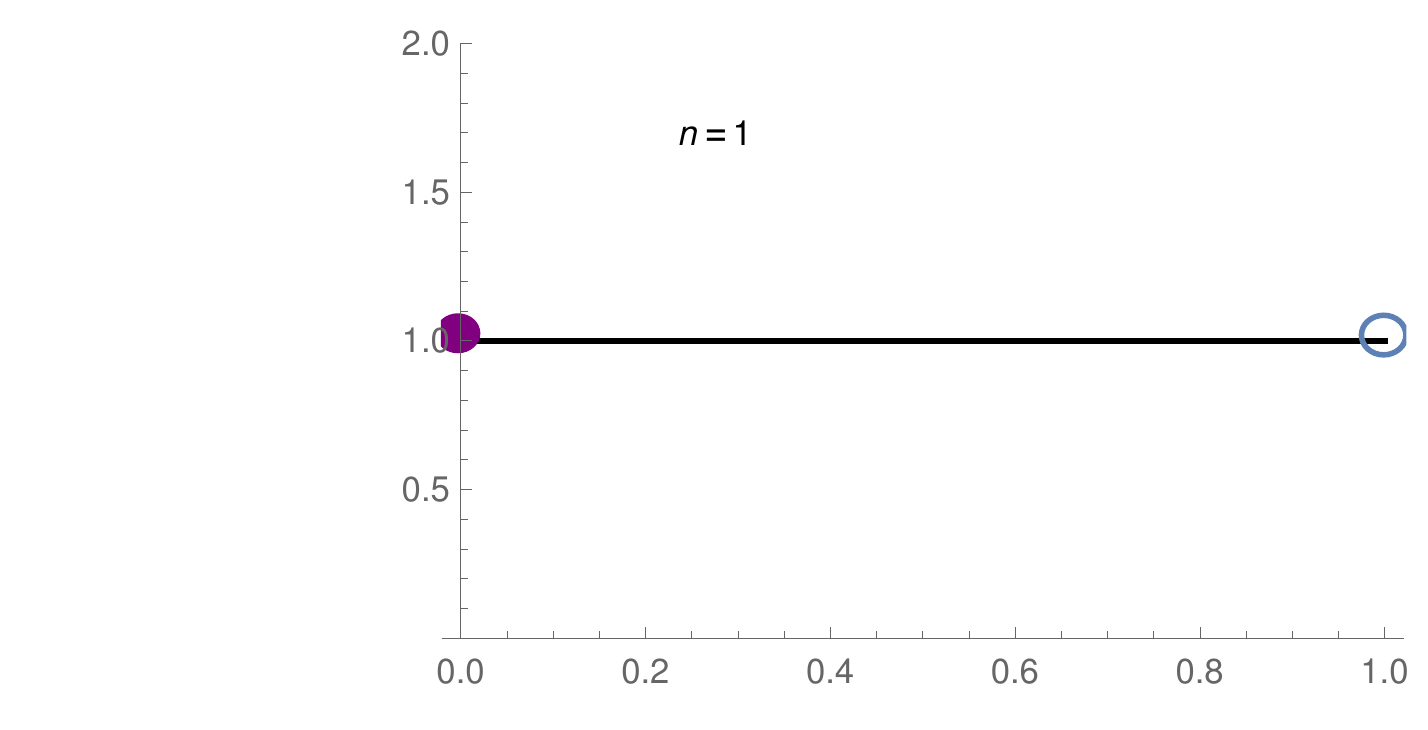}
  \label{fig:n1}
\end{figure}
\begin{figure}[hbt!]
  \includegraphics[width=80mm]{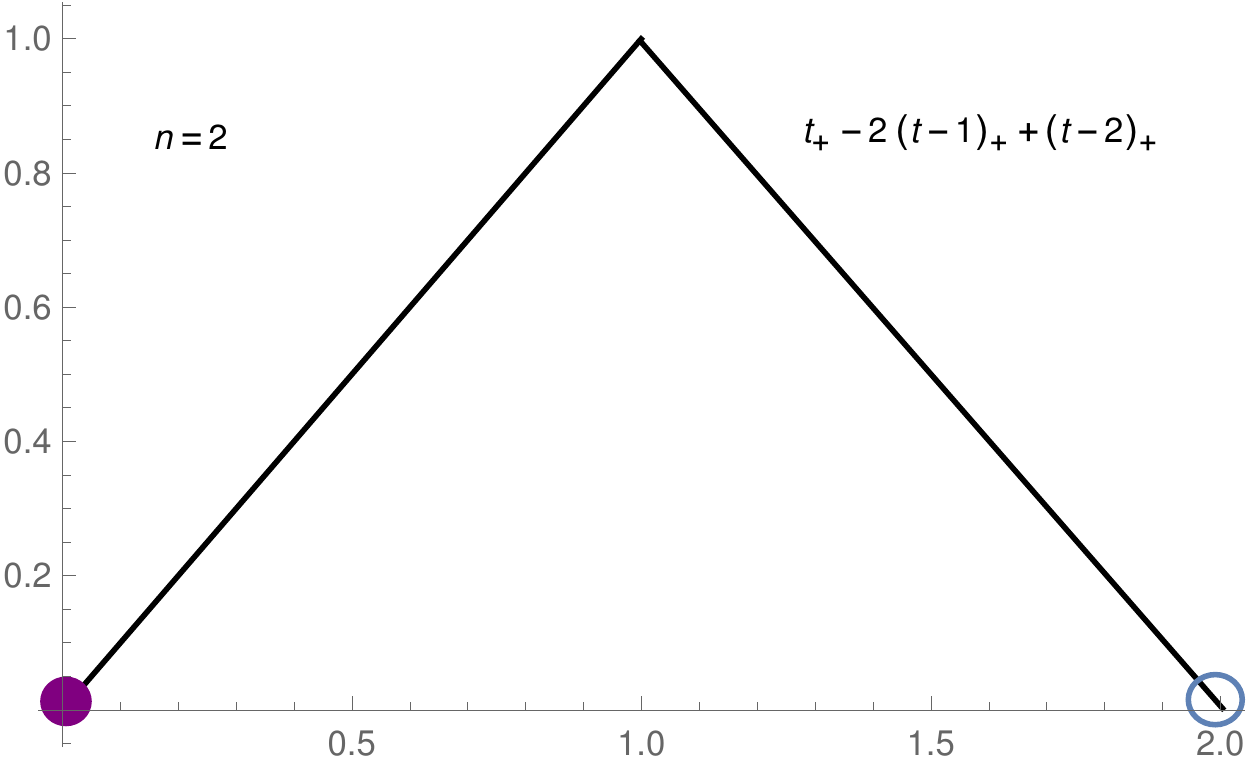}
  \label{fig:n2}
\end{figure}
\begin{figure}[hbt!]
  \includegraphics[width=80mm]{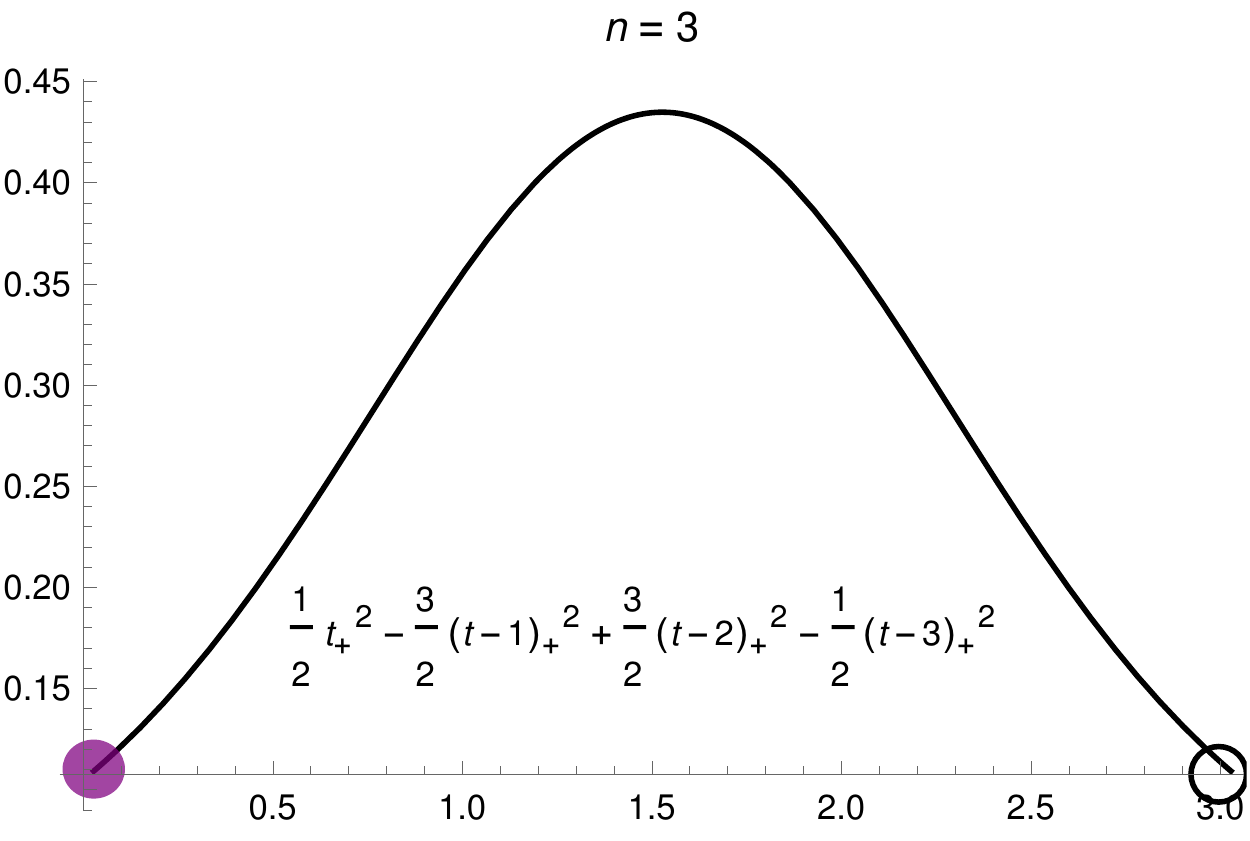}
  \label{fig:n3}
\end{figure}
In general it can be shown that $\rectFunc^{\otimes n}$ are unimodal functions and have maxima at $n/2$ and they are $(n-2)$ times differentiable on $(0,n)$ for $n\ge 3$.
We can write
\begin{align}
	\gamma_n(t)=\underbrace{\rectFunc*\cdots\rectFunc}_n(t)
\end{align}
and is given by the expression in (\ref{nconvrect}). These are universal functions. They may have other good applications in numerical analysis.

We can give an alternative expression for $\gamma_n$. Note that 

\begin{align*}
	\widehat{J^n f}(s)&=\widehat{M}_J(s)^n\hat{f}(s)\\
	&=\frac{1}{s^n}\hat{f}(s)
\end{align*}
and by Lemma \ref{lem2} with $a=0$ and $\gamma=n-1$.

\begin{align}
	J^n f(t)&=\int^t_0\frac{(t-\tau)^{n-1}}{(n-1)!}f(\tau)\,d\tau
\end{align}
If we select $f\equiv 1$, then
\begin{align*}
	J^n f(t)&=\int^t_0\frac{(t-\tau)^{n-1}}{(n-1)!}\,d\tau
\end{align*}
and so 
\begin{align}
	J^{n-1}1(t)=\frac{1}{(n-1)!}t^{n-1}_{+}
\end{align}
and
\begin{align}
	\widehat{J^{n-1}}(s)=\frac{1}{s^n}
\end{align}

\begin{lem} We have
\begin{align}\label{2a}
	\gamma_n(t)&=\frac{\Delta^n}{(n-1)!}t^{n-1}_{+}
\end{align}	
\end{lem}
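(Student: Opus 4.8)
The plan is to verify the identity in the Laplace domain and then invoke uniqueness of the transform. First I would record the transform of the right-hand side. Write $q(t)=\frac{1}{(n-1)!}t^{n-1}_+$; by Lemma \ref{lem2} with $a=0$ and $\gamma=n-1$ (equivalently by the computation of $J^{n-1}1$ carried out just above the statement), one has $\hat q(s)=1/s^n$. Since $\Delta=I-L$ acts as a multiplier with $\widehat{\Delta f}(s)=(1-e^{-s})\hat f(s)$, iterating $n$ times gives $\widehat{\Delta^n f}(s)=(1-e^{-s})^n\hat f(s)$, so that
\begin{align*}
	\widehat{\Delta^n q}(s)=\frac{(1-e^{-s})^n}{s^n}.
\end{align*}

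On the other hand, the left-hand side satisfies
\begin{align*}
	\widehat{\gamma_n}(s)=\widehat{\rectFunc}(s)^n=\frac{(1-e^{-s})^n}{s^n},
\end{align*}
which is the very same expression. Both $\gamma_n$ and $\Delta^n q$ are supported in $[0,n]$ (the former because $\rectFunc$ lives on $[0,1)$ and convolution adds supports; the latter because for $t\ge n$ the quantity $\sum_{r=0}^n(-1)^r\binom{n}{r}(t-r)^{n-1}$ is the $n$-th finite difference of a polynomial of degree $n-1<n$, hence vanishes), so both are in $L^1_+(\R)$ and their transforms converge for $s>0$. Uniqueness of the Laplace transform (Lerch) then forces $\gamma_n=\frac{\Delta^n}{(n-1)!}t^{n-1}_+$ almost everywhere, and everywhere by continuity when $n\ge 2$.

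I expect the only point needing genuine care to be the justification that $\Delta^n$ acts as the multiplier $(1-e^{-s})^n$ on $q$. Two small things must be checked: that $I$ and $L$ commute, so that the operator binomial expansion $\Delta^n=\sum_{r=0}^n\binom{n}{r}(-1)^r L^r$ is valid; and that $t^{n-1}_+$, although not integrable, still has a Laplace transform convergent for $s>0$, so the multiplier computation is legitimate term by term. Both are immediate: $L^r f(t)=f(t-r)$, distinct shifts commute, and each $L^r t^{n-1}_+=(t-r)^{n-1}_+$ has transform $e^{-rs}\,(n-1)!/s^n$ by Lemma \ref{lem2}.

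As an alternative that avoids transforms entirely, I would expand $\Delta^n$ directly and compute
\begin{align*}
	\frac{\Delta^n}{(n-1)!}t^{n-1}_+=\sum_{r=0}^n(-1)^r\binom{n}{r}\frac{1}{(n-1)!}(t-r)^{n-1}_+,
\end{align*}
which is precisely the expression for $\gamma_n$ in (\ref{nconvrect}). This second route is essentially a restatement of the earlier lemma, so the Laplace argument is the more informative one and is the route I would present.
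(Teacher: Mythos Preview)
Your proof is correct and follows essentially the same route as the paper: compute the Laplace transform of both sides, observe that each equals $(1-e^{-s})^n/s^n$, and conclude by uniqueness; the paper likewise notes at the end that expanding $\Delta^n$ recovers formula~(\ref{nconvrect}). Your version is more carefully written (supplying the integrability and commutativity checks the paper omits), but the approach is identical.
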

\begin{proof}
	This follows from
\begin{align*}
	\widehat{\gamma}_n(s)\widehat{M}_{\Delta}(s)^n\widehat{J^{n-1}1}(s)
\end{align*}
As
\begin{align*}
	\Delta&=I-L\\
	\Delta^n &=\sum^{\infty}_{r=0}(-1)^r\binom{n}{r}L^r
\end{align*}
and so (\ref{2a}) is equivalent to (\ref{nconvrect}). The last step to $g^{\otimes n}$ is to compute $\A^n$.
\end{proof}
\begin{lem}\label{lem5}
	\begin{align}
		A^n=\sum^{\infty}_{r=0}\beta^n_r L^r
	\end{align}
where $\{\beta^n_r\}$ are calculated as follows.

\begin{align}
	\beta^1_r = a_r,\quad r=0,1,2,\cdots
\end{align}

\begin{align}
	\beta^{n+1}_r=\sum^{n+1}_{l=0}a_l\beta^n_{r-l}
\end{align}
\end{lem}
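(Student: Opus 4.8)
The plan is to prove the identity by induction on $n$, using that $\A^{n+1}=\A\circ\A^n$ together with the semigroup property of the shift operator $L$. Equivalently, one can argue on the multiplier side: with $z=e^{-s}$, the operator $\A$ carries the multiplier $\widehat{M}_\A(s)=\sum_{j\ge0}a_j z^j$, so the numbers $\beta^n_r$ are precisely the power-series coefficients of $\bigl(\sum_j a_j z^j\bigr)^n$, and the asserted recursion is nothing but the Cauchy-product rule for multiplying these series. I would present the operator version, from which the generating-function version follows by uniqueness of the coefficients.

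For the base case $n=1$ one has directly $\A^1=\A=\sum_{r\ge0}a_r L^r$, giving $\beta^1_r=a_r$. For the inductive step, assume $\A^n=\sum_{k\ge0}\beta^n_k L^k$. The one structural fact needed is the semigroup identity $L^l L^k=L^{l+k}$, which holds because $L^j f(t)=f(t-j)$ yields $L^l(L^k f)(t)=f(t-l-k)$. Using this and linearity,
\begin{align*}
\A^{n+1}=\A\,\A^n=\Bigl(\sum_{l\ge0}a_l L^l\Bigr)\Bigl(\sum_{k\ge0}\beta^n_k L^k\Bigr)=\sum_{l\ge0}\sum_{k\ge0}a_l\beta^n_k\,L^{l+k}.
\end{align*}
Grouping the terms with $l+k=r$, that is substituting $k=r-l$ with the convention $\beta^n_m=0$ for $m<0$ (which forces $0\le l\le r$), gives
\begin{align*}
\A^{n+1}=\sum_{r\ge0}\Bigl(\sum_{l=0}^{r}a_l\beta^n_{r-l}\Bigr)L^r,
\end{align*}
so that $\beta^{n+1}_r=\sum_{l}a_l\beta^n_{r-l}$, which is the stated recursion; the sum terminates at $l=r$ automatically, since $\beta^n_{r-l}$ vanishes once $l>r$.

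The only point needing care is the reindexing of the double series into diagonal (Cauchy) form, and this turns out to be mild rather than a genuine obstacle. When the operators act on $f\in L^1_{+}(\R)$, for each fixed $t\ge0$ one has $L^r f(t)=f(t-r)=0$ for $r>t$, so every series in sight is locally finite and the rearrangement is trivially valid pointwise, reducing the claim to the purely algebraic matching of coefficients above. If one prefers a norm-level statement, absolute summability of the coefficients makes everything convergent in $L^1$: by the standing integrability hypothesis $\sum_{j\ge0}|a_j|=\int_0^\infty|g(t)|\,dt<1$, and a short induction gives $\sum_r|\beta^n_r|\le\bigl(\sum_j|a_j|\bigr)^n<\infty$, so both factors are absolutely convergent power series in $z=e^{-s}$ on $|z|<1$ and Mertens' theorem justifies the product. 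Thus the real work is just careful bookkeeping of the shift composition $L^lL^k=L^{l+k}$ and the index substitution $k=r-l$, after which the recursion drops out mechanically.
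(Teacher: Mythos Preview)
Your proof is correct and follows essentially the same route as the paper: write $\A^{n+1}=\A\cdot\A^n$, expand the product of the two series in $L$, and equate coefficients of $L^r$ using $L^lL^k=L^{l+k}$. You have simply supplied more detail (the explicit reindexing $k=r-l$ and the justification for rearranging the double sum) than the paper's one-line sketch.
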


\begin{proof}
	We start with
\begin{align*}
	\A^{n+1}&=\A\sum^{\infty}_{r=0}\beta^n_r L^r\\
	&=\left(\sum^{\infty}_{j=0}a_j L^j\right)\left(\sum^{\infty}_{r=0}\beta^n_r L^r\right)
\end{align*}
and equate coefficient of $L^r$ on both sides.
\end{proof}
\begin{cor}[Corollary to \ref{lem5}]
	We note that $\beta^n_r$ is the coefficient of $x^r$ in 
	\begin{align*}
&\		(a_0+a_1x+a_2x^2+\cdots)^n\\
= &\ \sum^{\infty}_{r=0}\beta^n_r x^r
	\end{align*}
and if we put $x=1$,
\begin{align}\label{eq33}
	\sum^{\infty}_{r=0}\beta^n_r = k^n
\end{align}
\end{cor}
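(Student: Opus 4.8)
The plan is to read off the generating-function description of $\beta^n_r$ directly from the recursion in Lemma \ref{lem5}, and then obtain \eqref{eq33} by specialising the resulting power-series identity to $x=1$. Write $P(x)=\sum_{j=0}^{\infty}a_j x^j$ for the ordinary generating function of the coefficients $a_j$ of $\A$; this is exactly the multiplier $\widehat{M}_{\A}(s)$ of $\A$ under the substitution $x=e^{-s}$, since $\widehat{M}_{\A}(s)=\sum_{j=0}^{\infty}a_j e^{-js}$. The first assertion is that $P(x)^n=\sum_{r=0}^{\infty}\beta^n_r x^r$, and the second is that $P(1)^n=k^n$.

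First I would prove the generating-function identity by induction on $n$. For $n=1$ the base relation $\beta^1_r=a_r$ says precisely that $\sum_r \beta^1_r x^r=\sum_r a_r x^r=P(x)$. For the inductive step, observe that the recursion $\beta^{n+1}_r=\sum_{l}a_l\beta^n_{r-l}$ (with the convention $\beta^n_m=0$ for $m<0$) is exactly the Cauchy-product rule for the coefficients of a product of two power series. Hence if $\sum_r \beta^n_r x^r=P(x)^n$, then $\sum_r \beta^{n+1}_r x^r=P(x)\cdot P(x)^n=P(x)^{n+1}$, completing the induction. At the conceptual level this is nothing more than the operator identity $\A^{n+1}=\A\circ\A^n$ read off through multipliers, with $e^{-s}$ relabelled as $x$.

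Next I would evaluate at $x=1$. Since $a_j\ge 0$ and, by the normalisation of the kernel, $\sum_{j=0}^{\infty}a_j=\int_0^{\infty}g(t)\,dt=k$ (by Lemma~1, $g=\A\rectFunc$ takes the value $a_j$ on the unit interval $[j,j+1)$, so its integral is $\sum_j a_j$), the series $P(x)$ converges at $x=1$ with $P(1)=k<1$. All coefficients $\beta^n_r$ are nonnegative, so $\sum_r \beta^n_r x^r$ likewise converges at $x=1$, and substituting $x=1$ into $P(x)^n=\sum_r \beta^n_r x^r$ yields $\sum_{r=0}^{\infty}\beta^n_r=P(1)^n=k^n$, which is \eqref{eq33}.

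The only genuine subtlety is the legitimacy of setting $x=1$: one must check that both sides of the power-series identity are defined and continuous up to the boundary of the disc of convergence. Here this is immediate because every term is nonnegative and the sums are finite ($k<1$), so Abel's theorem — or equivalently monotone convergence applied to the partial sums — justifies the substitution, and no interchange beyond nonnegative summation is required. The other point to state explicitly is the identification $\sum_j a_j=k$; this follows directly from $g=\A\rectFunc$ together with the normalisation $\int_0^{\infty}g=k$ recorded for the kernel, and it also recovers the mass identity $\|g^{\otimes n}\|_1=k^n$ noted in the Remark.
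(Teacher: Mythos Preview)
Your argument is correct and follows exactly the route the paper intends: the recursion of Lemma~\ref{lem5} is the Cauchy-product rule, so $\sum_r\beta^n_r x^r=(\sum_j a_j x^j)^n$, and evaluating at $x=1$ with $\sum_j a_j=\int_0^\infty g=k$ gives \eqref{eq33}. The paper records this as an immediate corollary without a separate proof; your induction and the remark on nonnegativity justifying the substitution $x=1$ simply make explicit what the paper leaves to the reader.
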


\begin{thm}\label{thm1}
\begin{align}\label{eq34}
	g^{\otimes n}(t)=A^n\gamma_n(t)=\sum^{\infty}_{r=0}\beta^n_r(L^r\gamma_n)(t)
\end{align}
\end{thm}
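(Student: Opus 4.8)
The plan is to pass to Laplace transforms, where each operator becomes multiplication by its multiplier, match the transforms of the two sides, and then invert using injectivity of the Laplace transform on $L^1_+(\R)$.

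First I would transform the left-hand side. Since $g^{\otimes n}$ is the $n$-fold convolution of $g$, the convolution theorem gives $\widehat{g^{\otimes n}}(s)=\hat g(s)^n$. The earlier lemma $g=\A\rectFunc$ yields $\hat g(s)=\widehat{M}_{\A}(s)\widehat{\rectFunc}(s)$, so
\begin{align*}
\widehat{g^{\otimes n}}(s)=\widehat{M}_{\A}(s)^n\,\widehat{\rectFunc}(s)^n.
\end{align*}
By definition $\gamma_n=\underbrace{\rectFunc*\cdots*\rectFunc}_n$, so the convolution theorem also gives $\widehat{\gamma}_n(s)=\widehat{\rectFunc}(s)^n$, and the right-hand factor above is exactly $\widehat{\gamma}_n(s)$.

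Next I would identify the left factor $\widehat{M}_{\A}(s)^n$ as the multiplier of $\A^n$. Since $L^j$ has multiplier $e^{-js}$, the operator $\A=\sum_j a_j L^j$ has multiplier $\widehat{M}_{\A}(s)=\sum_j a_j e^{-js}$, and composing $n$ copies multiplies multipliers. This matches Lemma \ref{lem5}: writing $\A^n=\sum_r \beta^n_r L^r$ and putting $x=e^{-s}$ in the generating identity of its Corollary, $(\sum_j a_j x^j)^n=\sum_r \beta^n_r x^r$, gives $\sum_r \beta^n_r e^{-rs}=\widehat{M}_{\A}(s)^n$. Combining the displays and using $\widehat{L^r\gamma_n}(s)=e^{-rs}\widehat{\gamma}_n(s)$ term by term,
\begin{align*}
\widehat{g^{\otimes n}}(s)=\Big(\sum_{r=0}^{\infty}\beta^n_r e^{-rs}\Big)\widehat{\gamma}_n(s)=\sum_{r=0}^{\infty}\beta^n_r\,\widehat{L^r\gamma_n}(s)=\widehat{\A^n\gamma_n}(s).
\end{align*}
Inverting by uniqueness of the Laplace transform then gives $g^{\otimes n}=\A^n\gamma_n=\sum_r \beta^n_r L^r\gamma_n$, which is (\ref{eq34}).

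The one point that needs care, and which I expect to be the main obstacle, is justifying the interchange of the infinite sum $\sum_r\beta^n_r(\cdot)$ with the Laplace transform together with the $L^1$-convergence of $\A^n\gamma_n$. Here the earlier estimates do the work: $\gamma_n$ is supported on $[0,n]$ with $\|\gamma_n\|_1=1$, so each translate $L^r\gamma_n$ is supported on $[r,r+n]$ with the same norm, while $\sum_r|\beta^n_r|=k^n<\infty$ by (\ref{eq33}) in the case $\beta^n_r\ge 0$. Hence $\sum_r\beta^n_r L^r\gamma_n$ converges absolutely in $L^1_+(\R)$, and at each fixed $t$ only finitely many $L^r\gamma_n(t)=\gamma_n(t-r)$ are nonzero, so the sum is pointwise well defined and the termwise transform is legitimate by dominated convergence. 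With this, the formal computation above becomes rigorous and the theorem follows.
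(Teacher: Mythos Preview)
Your proposal is correct and follows exactly the Laplace-transform/multiplier approach that the paper sets up in the lemmas preceding the theorem: the identity $\widehat{g^{\otimes n}}(s)=\widehat{M}_{\A}(s)^n\widehat{\rectFunc}(s)^n$, the definition of $\gamma_n$, and Lemma~\ref{lem5} for $\A^n$ are precisely the ingredients the paper assembles, and the theorem is stated there without a separate proof environment as an immediate consequence. Your added paragraph justifying the termwise Laplace transform via $\sum_r|\beta^n_r|=k^n$ and the compact support of $\gamma_n$ is a welcome piece of rigor that the paper leaves implicit.
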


\begin{rmk}
	The intuition of Theorem \ref{thm1} is that, if we want $n$-fold convolution, the way to do it is to apply $A$ $n$-times to the $\gamma$. Then $\gamma_n(t)$ is the $n$-fold convolution of the rectanglular function and itself. Then the operator $\A$ applies to $\gamma_n$, $n$ times.
\end{rmk}

\begin{thm}\label{thm2}
\begin{align}
	h(t)=\sum^{\infty}_{n=0}\sum^{n-1}_{r=0}\beta^n_r (L^r\gamma_n)(t)
\end{align}
\end{thm}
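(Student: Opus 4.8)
The plan is to feed the closed form for $g^{\otimes n}$ supplied by Theorem \ref{thm1} into the Neumann series for $h$ and then rearrange the resulting double sum. I would begin from the representation $h=\sum_{n=1}^{\infty}g^{\otimes n}$, which converges in $L^1_{+}(\R)$ because $\|g^{\otimes n}\|_1=k^n$ with $0<k<1$, so the series is dominated by a convergent geometric series. Theorem \ref{thm1} gives $g^{\otimes n}=\A^n\gamma_n=\sum_{r=0}^{\infty}\beta^n_r(L^r\gamma_n)$, and substituting termwise yields
\begin{align*}
	h=\sum_{n=1}^{\infty}\sum_{r=0}^{\infty}\beta^n_r\,(L^r\gamma_n).
\end{align*}

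Next I would justify that this double series is unconditionally summable, so that no ordering issues arise. Since $\gamma_n=\rectFunc^{\otimes n}$ is nonnegative with $\|\gamma_n\|_1=\|\rectFunc\|_1^{\,n}=1$ and the shift $L^r$ preserves the $L^1$ norm, each term satisfies $\|L^r\gamma_n\|_1=1$. Using $\beta^n_r\ge 0$ and the Corollary to Lemma \ref{lem5}, namely $\sum_{r=0}^{\infty}\beta^n_r=k^n$ in (\ref{eq33}), the total mass is controlled by
\begin{align*}
	\sum_{n=1}^{\infty}\sum_{r=0}^{\infty}\beta^n_r\,\|L^r\gamma_n\|_1=\sum_{n=1}^{\infty}k^n=\frac{k}{1-k}<\infty.
\end{align*}
Absolute convergence in $L^1_{+}(\R)$ then licenses, via Fubini--Tonelli, every rearrangement of the double sum and its identification with $h$.

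The step I expect to be the main obstacle is the reduction of the inner summation range to $0\le r\le n-1$. The only available mechanism is a support argument: $L^r\gamma_n$ is carried by the interval $[r,r+n]$, so on any fixed unit window $[m,m+1]$ only finitely many pairs $(n,r)$ are active, coupled through the constraint $r<t<r+n$. The delicate point is that, organised this way, the active indices on $[m,m+1]$ satisfy $\max(0,m-n+1)\le r\le m$, and matching this precisely with the claimed bound $r\le n-1$ requires careful interval-by-interval bookkeeping of how the shifted profiles $L^r\gamma_n$ overlap; one must keep in mind that $\beta^n_r$ does not vanish for $r\ge n$ once $g$ charges intervals beyond $[0,1)$, so any truncation must be traced to the evaluation window rather than to the coefficients. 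I would therefore establish the identity by fixing $t\in[m,m+1]$, collecting the finitely many surviving $(n,r)$, and verifying that their contribution reproduces the stated series, using the clean untruncated form derived above as the safe intermediate identity from which this bookkeeping proceeds.
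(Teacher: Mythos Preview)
The paper does not actually supply a proof of this theorem; it is stated immediately after Theorem~\ref{thm1} and treated as the obvious consequence of substituting (\ref{eq34}) into the Neumann series $h=\sum_{n\ge 1}g^{\otimes n}$. Your derivation of the untruncated identity
\[
h=\sum_{n=1}^{\infty}\sum_{r=0}^{\infty}\beta^n_r\,(L^r\gamma_n)
\]
is precisely that argument, and your absolute-convergence bookkeeping (via $\|L^r\gamma_n\|_1=1$ and $\sum_r\beta^n_r=k^n$) is more careful than anything the paper offers.

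Your suspicion about the inner bound $r\le n-1$ is well placed, and you have diagnosed it correctly. The printed upper limit is a slip: in general $\beta^n_r$ does not vanish for $r\ge n$ whenever $a_j\neq 0$ for some $j\ge 1$, and $L^r\gamma_n$ has support $[r,r+n]$, so terms with $r\ge n$ contribute for large $t$. The paper's own later statement of $(h*f)(t)=\sum_{n}\sum_{r=0}^{\infty}\beta^n_r L^r(\gamma_n*f)$ uses the full range, and the accompanying corollaries make explicit that the truncation is tied to an evaluation window (for $t\le N$ one only needs $r\le N-1$, since $L^r\gamma_n(t)=0$ once $r>t$), not to the convolution order $n$. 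So there is nothing further for you to prove here: your ``safe intermediate identity'' with $r$ ranging over all of $\mathbb{N}$ is the correct pointwise formula, and the $r\le n-1$ bound should be read as $r\le\lfloor t\rfloor$ for a fixed $t$, which indeed renders the double sum finite at each point.
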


Instead of taking unit interval, we approximate in step of $\delta$. If you solve for the case $\delta=1$. Then Theorem \ref{thm2} gives us formula for $h$.

\section{Part II : Actual case for Part I} We now let
\begin{align}
	g_{\delta}(t)&=
\begin{cases}
	\beta_j, &\text{  }j\delta\le t<(j+1)\delta\\
	0, &\text{ otherwise }
\end{cases}
\end{align}
Let us now define some useful operators. First, define $L_{\delta}$ ($\delta>0$)

\begin{align}
	(L_{\delta}f)(t)=f(t-\delta)
\end{align}

\begin{align}\label{eq36}
	(S_{\delta}f)(t)=\frac{1}{\delta}f\left(\frac{t}{\delta}\right)
\end{align}

\begin{align}
	S^{-1}_{\delta}=S_{1/\delta}
\end{align}

\begin{lem}\label{lem6}
	\begin{align}
		L_{\delta}= S_{\delta}LS^{-1}_{\delta}
	\end{align}
\end{lem}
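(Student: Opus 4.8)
The plan is to verify this operator identity by applying both sides to an arbitrary function $f\in L^1_+(\R)$ and evaluating at an arbitrary point $t$; since the claimed equality $L_\delta = S_\delta L S_\delta^{-1}$ is an identity of operators, it suffices to check that the two resulting functions agree pointwise. First I would record the explicit form of the inverse scaling: from definition (\ref{eq36}), replacing $\delta$ by $1/\delta$ gives $S_\delta^{-1} = S_{1/\delta}$ acting as $(S_{1/\delta}f)(t) = \delta f(\delta t)$, so that the prefactor inverts and the argument is dilated rather than contracted.

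Next I would compose the three operators from right to left. Applying $S_\delta^{-1}$ to $f$ produces the function $t\mapsto \delta f(\delta t)$; applying $L = L_1$ shifts its argument by one, yielding $t \mapsto \delta f(\delta(t-1))$; finally applying $S_\delta$ contracts the argument by $\delta$ and multiplies by $1/\delta$. Carrying the evaluation point through the composition gives the argument chain $t \mapsto t/\delta \mapsto t/\delta - 1 \mapsto \delta(t/\delta - 1) = t - \delta$, while the multiplicative factors combine as $\tfrac{1}{\delta}\cdot\delta = 1$. Hence $(S_\delta L S_\delta^{-1} f)(t) = f(t-\delta)$, which is exactly $(L_\delta f)(t)$ by definition, establishing the identity.

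There is no serious obstacle here: the lemma is a routine conjugation computation expressing that the unit shift $L$, read in the $\delta$-scaled coordinates set up by $S_\delta$, becomes the shift $L_\delta$ by $\delta$. The only point demanding care is the bookkeeping of the two competing factors $\delta$ and $1/\delta$ together with the reciprocal inside $S_{1/\delta}$; one must confirm both that the multiplicative prefactors cancel exactly and that the nested argument substitutions collapse to $t-\delta$ rather than, say, $t-1/\delta$ or $t/\delta-\delta$. Once those cancellations are checked the identity is immediate, and this same conjugation scheme is precisely what will later let the Part~II analysis with spacing $\delta$ reduce to the already-solved $\delta=1$ case of Part~I.
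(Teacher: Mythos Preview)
Your proof is correct and follows essentially the same approach as the paper: both apply $S_\delta^{-1}$, then $L$, then $S_\delta$ to an arbitrary $f$ and track the argument and prefactors to arrive at $f(t-\delta)=(L_\delta f)(t)$. The only cosmetic difference is that you phrase the argument chain as $t\mapsto t/\delta\mapsto t/\delta-1\mapsto t-\delta$, whereas the paper writes out the intermediate functions explicitly.
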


\begin{proof}
	For the left hand side,
\begin{align*}
	L_{\delta}f(t)=f(t-\delta)
\end{align*}
For the right hand side,
\begin{align*}
	f(t)&\stackrel{S_{\frac{1}{\delta}}}{\to}\delta f(t\delta)\stackrel{L}{\to}\delta f(\delta(t-1))=\delta f(\delta t-\delta)\stackrel{S_{\delta}}{\to}=\frac{1}{\delta}(\delta f(\delta \frac{t}{\delta}-\delta))=f(t-\delta)
\end{align*}
\end{proof}

Let $g$ be as in Part I. Then

\begin{align}
&\	S_{\delta}g(t)\notag\\
= &\ \frac{1}{\delta}g\left(\frac{t}{\delta}\right)\notag\\
= &\
\begin{cases}
	\frac{1}{\delta}a_j, &\text{ if }j\le \frac{t}{\delta}<j+1\\
	0, &\text{ otherwise }
\end{cases}
\end{align}
So
\begin{align*}
	g_{\delta} = S_{\delta}g
\end{align*}
when $a_j=\beta_j\delta$ for $j=0,1,\cdots$. Suppose we want to find the solution of 

\begin{align}\label{hdel}
	h_{\delta}(t)=g_{\delta}(t)+\int^t_0 h_{\delta}(\tau)g_{\delta}(t-\tau)\,d\tau
\end{align}
We show:
\begin{lem}\label{lem7}
\begin{align}
	h_{\delta}(t)&=\frac{1}{\delta}h\left(\frac{1}{\delta}\right),\quad\text{for}\quad t\ge 0
\end{align}
\end{lem}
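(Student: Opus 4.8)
The plan is to exploit the fact that the rescaling operator $S_\delta$ is a homomorphism of the convolution algebra $L^1_+(\R)$, and then simply transport the Part~I solution $h$ to a solution of \eqref{hdel}. I read the claimed formula as $h_\delta(t)=\tfrac{1}{\delta}h(t/\delta)=(S_\delta h)(t)$, the displayed $h(1/\delta)$ being a typographical slip for $h(t/\delta)$.

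First I would prove the scaling identity
\begin{align*}
(S_\delta f)*(S_\delta k)=S_\delta(f*k),\qquad f,k\in L^1_+(\R).
\end{align*}
This is a one-line change of variables: writing out $[(S_\delta f)*(S_\delta k)](t)=\int_0^t \tfrac{1}{\delta}f(\tau/\delta)\,\tfrac{1}{\delta}k((t-\tau)/\delta)\,d\tau$ and substituting $u=\tau/\delta$ collapses the two factors of $1/\delta$ to one and rescales the upper limit to $t/\delta$, yielding $\tfrac{1}{\delta}(f*k)(t/\delta)$. The factor $1/\delta$ built into the definition \eqref{eq36} of $S_\delta$ is exactly what makes this hold. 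The same substitution shows $S_\delta$ is an $L^1$-isometry, so $\|g_\delta\|_1=\|S_\delta g\|_1=\|g\|_1<1$ and the Neumann series for $h_\delta$ converges.

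Next I would verify that $S_\delta h$ solves \eqref{hdel}. Using $g_\delta=S_\delta g$ (established just above with $a_j=\beta_j\delta$), the scaling identity, the Part~I equation $h=g+h*g$, and linearity of $S_\delta$, I compute
\begin{align*}
g_\delta+(S_\delta h)*g_\delta
&=S_\delta g+(S_\delta h)*(S_\delta g)\\
&=S_\delta g+S_\delta(h*g)\\
&=S_\delta(g+h*g)\\
&=S_\delta h.
\end{align*}
Thus $h_\delta=S_\delta h$ is a solution of \eqref{hdel}.

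Finally, to conclude that this is \emph{the} solution, I would invoke uniqueness: \eqref{hdel} is a Volterra equation of the second kind with $\|g_\delta\|_1<1$, so the affine map $u\mapsto g_\delta+u*g_\delta$ is a contraction on $L^1_+(\R)$ (convolution by $g_\delta$ has operator norm at most $\|g_\delta\|_1<1$) and its fixed point is unique; equivalently, the solution is forced to equal the convergent Neumann series $\sum_{n\ge 1}g_\delta^{\otimes n}$. Hence $h_\delta=S_\delta h$. I do not expect a genuine obstacle here; the only point requiring care is the change of variables in the scaling identity, where one must track the two $1/\delta$ factors and the rescaled upper limit so that exactly one factor of $1/\delta$ survives.
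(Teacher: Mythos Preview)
Your proof is correct and is essentially the paper's own argument: both rely on the change of variables $u=\tau/\delta$ to relate the two Volterra equations and then invoke uniqueness of the solution. The only cosmetic difference is that you first isolate the scaling identity $(S_\delta f)*(S_\delta k)=S_\delta(f*k)$ and push $h$ forward to a solution of \eqref{hdel}, whereas the paper performs the substitution inside the integral and pulls $h_\delta$ back to a solution of \eqref{eq3}; the underlying computation and the appeal to uniqueness are the same.
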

\begin{proof}
 Note that $h\delta$ is the solution of (\ref{hdel}) but $g_{\delta}(t)=\frac{1}{\delta}g\left(\frac{t}{\delta}\right)$, then
 
\begin{align*}
    h_{\delta}(t)&=\frac{1}{\delta}g\left(\frac{t}{\delta}\right)+\int^t_0 h_{\delta}(\tau)\frac{1}{\delta}g\left(\frac{t-\tau}{\delta}\right)\,d\tau\\
    &= \frac{1}{\delta}g\left(\frac{t}{\delta}\right)+\int^t_0 h_{\delta}(\tau)\frac{1}{\delta}g\left(\frac{t}{\delta}-\frac{\tau}{\delta}\right)\,d\tau\\
    &= \frac{1}{\delta}g\left(\frac{t}{\delta}\right)+\int^t_0 h_{\delta}(u\delta)\frac{1}{\delta}g\left(\frac{t}{\delta}-u\right)\delta\,du\quad u=\tau/\delta\\
    &=\frac{1}{\delta}g\left(\frac{t}{\delta}\right)  +\int^t_0 h_{\delta}(u\delta)g\left(\frac{t}{\delta}-u\right)\,du
\end{align*}
 Therefore,
 
\begin{align*}
	h_{\delta}(t\delta)&=\frac{1}{\delta}g(t)+\int^{t}_0 h_{\delta}(u\delta)g(t-u)\,du
\end{align*}
or
\begin{align*}
	\delta h_{\delta} (t\delta)=g(t)+\int^t_0\delta h_{\delta} (u\tau)g(t-u)du
\end{align*}
and by uniqueness of solution of the Volterra equation (\ref{eq3}),
\begin{align*}
	\delta h_{\delta}(t\delta)=h(t)
\end{align*} 
which implies 
\begin{align*}
	\delta h_{\delta}(t)=h(t/\delta)
\end{align*} 
or
\begin{align*}
	 h_{\delta}(t)=\frac{1}{\delta} h\left(\frac{t}{\delta}\right)
\end{align*} 
and so
\begin{align*}
	h_{\delta}(t)=\frac{1}{\delta}h\left(\frac{t}{\delta}\right)=S_{\delta}h(t)
\end{align*}
using the definition of $S_{\delta}$ in (\ref{eq36}).

\end{proof}

\section{Applications}

The solution to equation (1) is

\begin{align}
	y(t)=f(t)+(h*f)(t)
\end{align}

\begin{eg}
	Suppose
\begin{align}
	f(t)=\sum^{N}_{i=1}w_i\delta(t-t_i)+f_1(t)
\end{align}
where $\{w_1,\cdots,w_N\}$ are some weights and $t_1<t_2<\cdots<t_N$, $\delta$ is the diract delta function and $f_1\in L^1_{\text{loc}}(\R_+)$ (This means that $f_1(t)=-$ for $t<0$ and for any $T>0$, $\int^T_0|f_1(t)|\,dt<\infty$). If $f_1\in L^1(\R_+)$, that is $\int^{\infty}_0|f(t)|\,dt<\infty$, then $f_1\in L^1_{\text{loc}}(\R_+)$. If $f(t)=2+\sin(t)$ for $t\ge 0$ and $=0$ for $t<0$, then $w_i=0$, $i=1,\cdots,N$ and $f_1(t)=2+\sin{t}\in L^1_{\text{loc}}(\R_+)$, $f_1\notin L^1(\R_+)$. Thus
\begin{align*}
	y(t)&=f(t)+\left[h*\left(\sum^N_{i=1}w_i\delta(\cdot-t_i)+f_1\right)\right](t)\notag\\
	&= f(t)+\sum^{N}_{i=1}w_i h(t_i)+(h*f_1)(t)
\end{align*}
as $f_1\in L^1_{\text{loc}}(\R_{+})$, then $(h*f_1)(t)$ is well defined (as $h\in L^1(\R_{+})$) and $(h*f_1)\in L^1_{\text{loc}}(\R_{+})$ and 
\begin{align}
	\int^T_0|h*f_1(t)|\,dt&\le \|h\|_{L^1(\R_+)}\int^T_0|f_1(t)|\,dt
\end{align}
This means that we need a formula for $(h*f_1)(t)$ when $f_1\in L^1{\text{loc}}(R_{+}).$
\end{eg}

\begin{lem}
Let $f\in L^1_{\text{loc}}(\R^+)$, then
\begin{align}
    (h_{\delta}*f)(t)=S_{\delta}(h*(S_{1/\delta}f))(t),\quad t\ge 0
\end{align}
\end{lem}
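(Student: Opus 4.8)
The plan is to reduce the identity to a single structural property of the scaling operator $S_{\delta}$: that it is a homomorphism for convolution, up to its built-in normalisation. Concretely, I would first isolate the auxiliary claim that for any two functions $a,b$ for which the convolution is defined on $\R_+$,
\begin{align*}
	S_{\delta}(a*b) = (S_{\delta}a)*(S_{\delta}b).
\end{align*}
Granting this, the lemma is immediate: taking $a=h$ and $b=S_{1/\delta}f$ gives $S_{\delta}\big(h*(S_{1/\delta}f)\big) = (S_{\delta}h)*\big(S_{\delta}S_{1/\delta}f\big)$, after which I would use the relation $S_{\delta}S_{1/\delta}=I$ (i.e. $S^{-1}_{\delta}=S_{1/\delta}$) to collapse the second factor to $f$, and then substitute $h_{\delta}=S_{\delta}h$ from Lemma \ref{lem7}. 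This yields $(S_{\delta}h)*f = h_{\delta}*f$, which is exactly the asserted equality.

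The homomorphism claim itself I would establish by a direct change of variables. Writing out the definitions,
\begin{align*}
	\big((S_{\delta}a)*(S_{\delta}b)\big)(t) = \int_0^t \frac{1}{\delta}a\!\left(\frac{t-\tau}{\delta}\right)\frac{1}{\delta}b\!\left(\frac{\tau}{\delta}\right)d\tau,
\end{align*}
and the substitution $\tau=\delta u$ (so $d\tau=\delta\,du$, with $\tau=t$ corresponding to $u=t/\delta$) turns this into $\tfrac{1}{\delta}\int_0^{t/\delta} a\!\left(\tfrac{t}{\delta}-u\right)b(u)\,du = \tfrac{1}{\delta}(a*b)(t/\delta)$, which is precisely $S_{\delta}(a*b)(t)$. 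The only bookkeeping to verify is that the limits of integration transform correctly under the rescaling and that every integral is finite for each fixed $t$.

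At this last point I would invoke the hypotheses. Since $h\in L^1(\R_+)$ while $f\in L^1_{\text{loc}}(\R_+)$, the convolution $h*f$ and all the intermediate expressions are well defined and locally integrable — this is exactly the estimate recorded in the Example preceding the statement — and applying $S_{\delta}$ or $S_{1/\delta}$ clearly preserves membership in $L^1_{\text{loc}}(\R_+)$, so no integrability is lost along the way. I would note that an equally valid but less transparent route is a single brute-force computation: expand $(h_{\delta}*f)(t)=\int_0^t \tfrac{1}{\delta}h\big((t-\tau)/\delta\big)f(\tau)\,d\tau$ using $h_{\delta}=S_{\delta}h$, expand the right-hand side separately, and check that the same substitution $\tau=\delta u$ identifies them; the homomorphism formulation is cleaner and isolates the one nontrivial ingredient.

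I do not expect a genuine obstacle here, since the entire content is a change of variables. The only step demanding real care is the limit bookkeeping under rescaling together with the confirmation that the weaker hypothesis $f\in L^1_{\text{loc}}(\R_+)$ (rather than $f\in L^1(\R_+)$) already suffices for finiteness at each $t\ge 0$; this holds because a fixed $t$ only ever probes $f$ on the bounded interval $[0,t]$, over which $f$ is integrable by assumption.
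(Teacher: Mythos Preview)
Your proof is correct. The paper takes the direct route you mention as the alternative: it sets $J(t)=(h_\delta*f)(t)$, substitutes $h_\delta(\tau)=\tfrac{1}{\delta}h(\tau/\delta)$ from Lemma~\ref{lem7}, changes variables $u=\tau/\delta$, then evaluates at $\delta t$ and recognises $S_{1/\delta}J = h*S_{1/\delta}f$, whence $J=S_\delta(h*S_{1/\delta}f)$. Your version instead isolates the structural identity $S_\delta(a*b)=(S_\delta a)*(S_\delta b)$ as a standalone claim and then specialises it, which makes the role of the scaling operator transparent and yields a reusable lemma; the paper's computation is effectively your homomorphism identity unwound for the particular pair $(h,S_{1/\delta}f)$. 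The underlying change of variables $\tau=\delta u$ and the appeal to $h_\delta=S_\delta h$ are identical in both arguments.
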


\begin{proof}
Let 
\begin{align*}
    J(t)&=(h_{\delta}*f)(t)\\
    &=\int^t_0 h_{\delta}(\tau) f(t-\tau)\,d\tau\\
    &=\int^t_0\frac{1}{\delta} h\left(\frac{\tau}{\delta}\right)f(t-\tau)\,d\tau\quad\text{by Lemma 7}\\
    &=\int^{t/\delta}_0 h(u) f(t-u\delta)\,du\quad u=\frac{\tau}{\delta}\\
    &= \int^{t/\delta}_0 h(u) f(t-u\delta)\,du
\end{align*}
So
\begin{align*}
    J(\delta t) &=\int^t_0 h(u) f(t\delta-u\delta)\,du
\end{align*}
and so 

\begin{align*}
    \delta J(\delta t) = \int^t_0 h(u)\delta f(\delta(t-u))\,du
\end{align*}

\begin{align*}
    S_{\frac{1}{\delta}}J(t) = \int^t_0 h(u) S_{\frac{1}{\delta}}f(t-u)\,du
\end{align*}
and so 
\begin{align*}
    J(t) = S_{\delta}(h* S_{1/\delta} f) (t)
\end{align*}

\end{proof}
This means that $h_{\delta}*f$ can be calculated via $h$.

\textbf{Comment}\\

We might want to vary $\delta>0$, we could vary $h_{\delta}$ or we could keep $h$ constant (once calculated) and vary $S_{\delta}f$. If 
\begin{align*}
    f(t)=2+\sin{t},
\end{align*}
then
\begin{align*}
    S_{\delta}f(t) &= 
\begin{cases}
    2+\frac{1}{\delta}\sin{\frac{t}{\delta}}.& t\ge 0\\
    0 & t<0
\end{cases} 
\end{align*}

\begin{thm}
If $f\in L^1_{\text{loc}}(\R^+)$

\begin{align*}
 &\   (h*f)(t)\\
 = &\ \sum^{\infty}_{n=0}\sum^{\infty}_{r=0}\beta^n_r L^r (\gamma_n * f)
\end{align*}
\end{thm}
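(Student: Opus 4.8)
The plan is to start from the explicit series representation of $h$ that is already in hand and convolve it term by term with $f$, so that the real content reduces to two points: that the shift operator $L^r$ commutes with convolution by $f$, and that the resulting double series may be interchanged with the convolution integral. First I would recall that $h=\sum_{n=1}^{\infty}g^{\otimes n}$, and that Theorem \ref{thm1} gives $g^{\otimes n}=\sum_{r=0}^{\infty}\beta^n_r\,L^r\gamma_n$; substituting produces the double-series form of $h$ from Theorem \ref{thm2}. Convolving formally with $f$ then yields
\begin{align*}
(h*f)(t)=\sum_{n=1}^{\infty}\sum_{r=0}^{\infty}\beta^n_r\,\big((L^r\gamma_n)*f\big)(t),
\end{align*}
and the stated identity follows once the commutation relation $(L^r\gamma_n)*f=L^r(\gamma_n*f)$ is established and the interchange is justified (the $n=0$ term contributes nothing, matching the convention in Theorem \ref{thm2}).

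Second, I would verify the commutation relation by a direct change of variables. Since $(L^r\gamma_n)(\tau)=\gamma_n(\tau-r)$ and $\gamma_n$ vanishes on the negative axis, substituting $u=\tau-r$ in $\int_0^t\gamma_n(\tau-r)f(t-\tau)\,d\tau$ gives $\int_0^{t-r}\gamma_n(u)f((t-r)-u)\,du=(\gamma_n*f)(t-r)=L^r(\gamma_n*f)(t)$. This step is routine and relies only on the fact that all the functions involved are supported on $\R^+$.

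The main work, and the only genuine obstacle, is justifying the interchange of the infinite double sum with the convolution integral. I would fix an arbitrary $T>0$ and show that the double series converges absolutely in $L^1[0,T]$, which legitimizes both the rearrangement and passing the sum through the integral (via Tonelli, since the coefficients $\beta^n_r\ge 0$ under the standing assumption). The three estimates I would assemble are: $\|\gamma_n\|_1=\|\rectFunc\|_1^{\,n}=1$, so that a half-line Young estimate gives $\|\gamma_n*f\|_{L^1[0,T]}\le\|f\|_{L^1[0,T]}$; the shift does not increase the norm, $\|L^r(\gamma_n*f)\|_{L^1[0,T]}\le\|\gamma_n*f\|_{L^1[0,T]}$ (since $L^r$ moves mass rightward out of $[0,T]$); and the coefficient sum is controlled by the Corollary to Lemma \ref{lem5}, namely $\sum_{r=0}^{\infty}\beta^n_r=k^n$. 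Combining these,
\begin{align*}
\sum_{n=1}^{\infty}\sum_{r=0}^{\infty}\beta^n_r\,\big\|L^r(\gamma_n*f)\big\|_{L^1[0,T]}\le\sum_{n=1}^{\infty}k^n\,\|f\|_{L^1[0,T]}=\frac{k}{1-k}\,\|f\|_{L^1[0,T]}<\infty,
\end{align*}
using $0<k<1$. Since this bound is uniform over every compact interval and $f\in L^1_{\text{loc}}(\R^+)$, the interchange is valid on all of $\R^+$, and applying the commutation identity inside the sum delivers the stated formula.
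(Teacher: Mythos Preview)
Your argument is correct and, in fact, more careful than the paper's. The paper's proof consists of a single line: it verifies the commutation $(L^r\gamma_n)*f=L^r(\gamma_n*f)$ by taking Laplace transforms and using the multiplier identity $\widehat{L^r\gamma_n}(s)=\widehat{M_L}(s)^r\,\widehat{\gamma_n}(s)$, then declares the theorem proved. You instead obtain the commutation by the elementary change of variables $u=\tau-r$, which avoids the Laplace machinery altogether and makes transparent why only the support condition on $\R^+$ is needed. The second, more substantial difference is that you supply a genuine justification for passing the double sum through the convolution integral, via the $L^1[0,T]$ bound $\sum_n\sum_r\beta^n_r\,\|L^r(\gamma_n*f)\|_{L^1[0,T]}\le\frac{k}{1-k}\|f\|_{L^1[0,T]}$; the paper omits this step entirely. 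What your approach buys is rigor and self-containment (no appeal to injectivity of the Laplace transform, and an honest treatment of the interchange for merely locally integrable $f$); what the paper's approach buys is brevity and consistency with the multiplier formalism used throughout the rest of the note.
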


\begin{proof}
We note that
\begin{align*}
  &\  \reallywidehat{(L^r\gamma_n)*f}(s)\\
  = &\ \widehat{L^r\gamma_n}(s)\hat{f}(s)\\
  = &\ \widehat{M_L}(s)^r\widehat{(\gamma_n * f)}(s)\\
  = &\ \reallywidehat{L^r(\gamma_n*f)}(s)
\end{align*}
and the theorem is proved.
\end{proof}    

\begin{cor}
    If one requires $(h*f)(t)$ for $t\le n$ (for some integer $n\ge 1$) then one may use $L^r$ with $r\le n-1$.
\end{cor}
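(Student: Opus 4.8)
The plan is to start from the series representation of $(h*f)$ established in the immediately preceding Theorem,
$$(h*f)(t)=\sum_{m=0}^{\infty}\sum_{r=0}^{\infty}\beta^m_r\,L^r(\gamma_m*f)(t),$$
writing $m$ for the convolution-power index to avoid a clash with the fixed integer $n$ in the statement, and then to show that every term with $r\ge n$ contributes nothing to the value at a point $t\le n$. Since $L^r(\gamma_m*f)(t)=(\gamma_m*f)(t-r)$, it suffices to prove that $(\gamma_m*f)(s)=0$ whenever $s\le 0$; granting this, for $t\le n$ and $r\ge n$ we get $t-r\le 0$, the term drops out, and only $r\le n-1$ survives.

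The key supporting fact is the finite support of $\gamma_m$. Recall that $\gamma_m=\rectFunc^{\otimes m}$ is the $m$-fold convolution of $\rectFunc$, which is supported on $[0,1)$; hence $\gamma_m$ is supported on $[0,m)$ and in particular vanishes on $(-\infty,0)$. First I would combine this with the standing hypothesis $f\in L^1_{\text{loc}}(\R^+)$, which forces $f(s)=0$ for $s<0$, so that both factors live on the half-line. Writing $(\gamma_m*f)(s)=\int_0^s\gamma_m(\tau)f(s-\tau)\,d\tau$ for $s>0$ makes it manifest that $(\gamma_m*f)(s)=0$ for $s<0$, while at the endpoint $s=0$ the integral runs over a degenerate interval, so $(\gamma_m*f)(0)=0$ as well. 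This establishes $(\gamma_m*f)(s)=0$ for all $s\le 0$.

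With that in hand the conclusion is immediate: for $t\le n$ and any integer $r\ge n$ one has $t-r\le n-r\le 0$, so $(\gamma_m*f)(t-r)=0$ for every $m$, and the double sum collapses to $\sum_m\sum_{r=0}^{n-1}\beta^m_r\,L^r(\gamma_m*f)(t)$, which is exactly the assertion that one may restrict to $L^r$ with $r\le n-1$.

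The only delicate point is the boundary case $t=n$, $r=n$, where the shifted argument equals $0$ rather than being strictly negative; here the argument relies on the convolution vanishing at the left endpoint of its support, which is why I isolate the stronger claim $(\gamma_m*f)(0)=0$ rather than merely $(\gamma_m*f)(s)=0$ for $s<0$. I expect this endpoint bookkeeping to be the main, and essentially the only, obstacle; everything else is a direct consequence of the shift action of $L^r$ together with the compact support of the universal functions $\gamma_m$.
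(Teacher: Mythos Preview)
Your argument is correct and is exactly the natural one: the paper states this corollary without proof, and the reasoning it has in mind is precisely that $L^r(\gamma_m*f)(t)=(\gamma_m*f)(t-r)$ vanishes once $t-r\le 0$ because both $\gamma_m$ and $f$ are supported on $[0,\infty)$. One small remark: although you call the compact support of $\gamma_m$ on $[0,m)$ the ``key supporting fact'', your proof only uses that $\gamma_m$ vanishes on $(-\infty,0)$; the right endpoint of the support plays no role in this corollary (it is what drives the separate observation, made in the paper's subsequent Remark, that in the inner sum over $r$ only $r\le m-1$ is needed).
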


\begin{cor}
    We have 
\begin{align}\label{eq48}
    (h_{\delta}*f)(t) &= S^{-1}_{\delta}\sum^{\infty}_{n=0}\sum^{n-1}_{r=0}\beta^n_r L^r (\gamma_n * S_{\delta}f)(t)
\end{align}
and if we only need values for $t\le n\delta$, then may use $L^r$ with $r\le n-1$.
\end{cor}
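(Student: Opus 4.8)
The plan is to derive \eqref{eq48} by substituting the series of the preceding theorem into the scaling identity for $h_\delta * f$ established just above, and then rescaling the truncation bound of the preceding corollary. First I would recall the scaling lemma, which expresses the $\delta$-step convolution through the unit-step solution as
\[
(h_\delta * f)(t) = S_\delta\bigl(h * (S_{1/\delta} f)\bigr)(t).
\]
Since $S^{-1}_\delta = S_{1/\delta}$ by the definition \eqref{eq36} of $S_\delta$, the outer scaling operator and the rescaling applied to the input are mutually inverse scalings; keeping track of which of $S_\delta$ and $S^{-1}_\delta$ sits outside the sum and which acts on $f$ is precisely what fixes the arrangement of operators displayed in \eqref{eq48}.

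Next I would apply the preceding theorem with the admissible input $\tilde f = S_{1/\delta} f$, which lies in $L^1_{\mathrm{loc}}(\R^+)$ whenever $f$ does, to expand
\[
\bigl(h * \tilde f\bigr)(t) = \sum_{n=0}^{\infty}\sum_{r=0}^{\infty} \beta^n_r\, L^r(\gamma_n * \tilde f)(t).
\]
Substituting this expansion into the bracket of the scaling lemma, and leaving the scaling operator acting on the whole series, gives \eqref{eq48}; no interchange of the scaling operator with the summation is needed, since one merely replaces $h * \tilde f$ by its series representation. I would nonetheless record that the double series defines a genuine $L^1_{\mathrm{loc}}$ function to which the scaling operator may legitimately be applied: by \eqref{eq33} one has $\sum_{r} \beta^n_r = k^n$, and together with $\|g^{\otimes n}\|_1 = k^n$ and $0<k<1$ this yields absolute convergence.

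For the truncation claim I would invoke the preceding corollary in its unit-step form: to evaluate $h * \tilde f$ on an interval $s \le n$ only the shifts $L^r$ with $r \le n-1$ contribute, because $\gamma_n$ is supported on $[0,n]$ and $L^r(\gamma_n * \tilde f)(s) = (\gamma_n * \tilde f)(s-r)$ vanishes for $s \le r$. The outer scaling evaluates the bracket at the rescaled argument $s = t/\delta$, so the unit-step range $s \le n$ corresponds to $t \le n\delta$; this transports the cutoff $r \le n-1$ to the statement for $h_\delta * f$ on $t \le n\delta$.

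The step I expect to demand the most care is the scaling bookkeeping. Because the scalings and the shift do not commute freely --- indeed $L_\delta = S_\delta L S^{-1}_\delta$ by Lemma \ref{lem6} --- one must keep the scaling operator outside the double sum so that the summands remain the unit-scale universal functions $\gamma_n$ and the integer shifts $L^r$ with the fixed coefficients $\beta^n_r$, rather than their $\delta$-dilated counterparts. This is exactly what makes \eqref{eq48} useful in practice, since $\gamma_n$ and $\beta^n_r$ are computed once and only the input $f$ is rescaled with $\delta$. The only other point to check is that $S_{1/\delta} f$ stays in $L^1_{\mathrm{loc}}(\R^+)$ so that the theorem applies, and this is immediate from \eqref{eq36}.
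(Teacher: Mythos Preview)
Your approach is exactly what the paper intends: the corollary is stated without proof, as an immediate consequence of combining the scaling lemma $(h_\delta*f)=S_\delta(h*S_{1/\delta}f)$ with the preceding theorem applied to $\tilde f=S_{1/\delta}f$, together with the rescaled truncation bound from the previous corollary.

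One point you hand-wave over deserves a sharper statement. The direct substitution you describe actually produces
\[
(h_\delta*f)(t)=S_\delta\sum_{n,r}\beta^n_r\,L^r\bigl(\gamma_n*S_{1/\delta}f\bigr)(t),
\]
with $S_\delta$ outside and $S_{1/\delta}$ applied to $f$. The printed \eqref{eq48} has these two scalings interchanged ($S^{-1}_\delta$ outside, $S_\delta f$ inside). You note that the outer and inner scalings are mutual inverses and say that ``keeping track of which \ldots\ sits outside \ldots\ fixes the arrangement'', but you never carry out that bookkeeping; in fact the lemma forces the $S_\delta$-outside, $S_{1/\delta}$-inside arrangement, so the discrepancy is a misprint in the statement rather than a flaw in your derivation. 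It would be cleaner to say so explicitly and record the corrected formula your argument actually proves.
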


\begin{lem}
If $f_1$, $f_2$ are in $L^1_{\text{loc}}(\R^+)$, then
\begin{align}
    L(f_1*f_2)=(Lf_1)*f_2 = f_1 * Lf_2
\end{align}
\end{lem}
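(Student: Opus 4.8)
The plan is to exploit the multiplicative behaviour of both convolution and the shift operator $L$ under the Laplace transform, reducing the two claimed equalities to an identity between three products of scalar functions. First I would record the two facts already established in the excerpt: convolution transforms to a product, $\widehat{f_1 * f_2}(s) = \hat{f_1}(s)\,\hat{f_2}(s)$, and the shift transforms to multiplication by the multiplier $\widehat{M}_L(s) = e^{-s}$, so that $\widehat{Lf}(s) = e^{-s}\hat{f}(s)$ by (\ref{eq14}).

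Then I would compute each of the three expressions on the Laplace side. For the left term, $\widehat{L(f_1 * f_2)}(s) = e^{-s}\hat{f_1}(s)\hat{f_2}(s)$; for the middle term, $\widehat{(Lf_1)*f_2}(s) = \widehat{Lf_1}(s)\,\hat{f_2}(s) = e^{-s}\hat{f_1}(s)\hat{f_2}(s)$; and for the right term, $\widehat{f_1 * Lf_2}(s) = \hat{f_1}(s)\,e^{-s}\hat{f_2}(s)$. All three coincide, so injectivity of the Laplace transform on those functions of at most exponential growth delivers the pointwise equalities.

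Because the ambient space is only $L^1_{\text{loc}}(\R^+)$ and the Laplace transform need not converge for every such $f$, I would back up the transform argument with a direct computation requiring no integrability beyond local integrability. Using the convention that every $f \in L^1_{\text{loc}}(\R^+)$ vanishes for negative argument, the function $(Lf_1)(t)=f_1(t-1)$ is supported on $[1,\infty)$; hence in $((Lf_1)*f_2)(t)=\int_0^t f_1(\tau-1)f_2(t-\tau)\,d\tau$ the integrand vanishes for $\tau<1$, and the substitution $u=\tau-1$ turns the integral into $\int_0^{t-1}f_1(u)f_2(t-1-u)\,du=(f_1*f_2)(t-1)=(L(f_1*f_2))(t)$. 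The symmetric claim $f_1 * Lf_2 = L(f_1*f_2)$ follows the same way, now using that $f_2(t-\tau-1)$ vanishes once $\tau>t-1$.

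The main obstacle is not any single calculation but the bookkeeping of supports under the shift: one must check that the shortened upper limit $t-1$ appearing after the change of variables is exactly compensated by the vanishing of the shifted factor, so that the integrals genuinely agree for all $t\ge 0$ (both sides being zero when $t<1$). Once the support conventions are fixed, the substitution is routine and the two approaches corroborate one another.
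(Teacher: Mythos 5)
Your direct computation is exactly the paper's proof: shift, write out the convolution integral over $[0,t-1]$, substitute $u=\tau+1$, and use the vanishing of $f_1(u-1)$ on $[0,1)$ to extend the lower limit to $0$ (the paper then gets the third equality from commutativity of convolution rather than a second symmetric computation, which is an immaterial difference). Your preliminary Laplace-transform argument is superfluous and, as you yourself correctly note, not justified on all of $L^1_{\text{loc}}(\R^+)$, but since you supply the elementary change-of-variables argument as the actual proof, the proposal is sound.
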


\begin{proof}
We have
\begin{align*}
&\    L(f_1*f_2)(t)\\
= &\ (f_1 * f_2)(t-1)\\
= &\ \int^{t-1}_0 f_1(\tau)f_2(t-1-\tau)\,d\tau\\
= &\ \int^t_1 f_1(u-1) f_2(t-u)\,du\quad (u=1+\tau)\\
= &\ \int^t_0 f_1(u-1) f_2(t-u)\,du\,\,\text{as}\,\,\,f_1(u-1)=0,\,\,\text{for}\,\,\,0\le u<1\\
= &\ \int^t_0 (Lf_1)(u) f_2 (t-u)\,du\\
= &\ \{(Lf_1)*f_2\}(t)
\end{align*}
as $f_1*f_2 = f_2*f_1$, the theorem follows.
\end{proof}

\begin{rmk}
    We have 
    
\begin{align}
    L^r(\gamma_n *f)=(L^r\gamma_n)*f
\end{align}
We could have $L^r\gamma_n$ functions tabulated (universally calculated) for $r$ and $n$, we only need $r\le n-1$ in (\ref{eq33}) and (\ref{eq48}) as $\gamma_n(t)=0$ for $t\ge n$
\end{rmk}

\begin{lem}
Let $f_1$, $f_2\in L^1_{\text{loc}}(\R^{+})$.

\begin{align}
    L_{\delta}(f_1*f_2)&=(L_{\delta}f_1)*f_2\\
    &=\delta_1*(L\delta f_2)
\end{align}
\end{lem}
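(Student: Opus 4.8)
The plan is to mirror the proof of the unit-shift lemma $L(f_1*f_2)=(Lf_1)*f_2=f_1*Lf_2$ proved just above, replacing the shift by $1$ with a shift by $\delta$; the argument is a single change of variables combined with the support condition built into $L^1_{\text{loc}}(\R^+)$. (I read the second line of the display as $f_1*(L_{\delta}f_2)$, the evident $\delta$-analogue of $f_1*Lf_2$.)

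First I would unfold the left-hand side directly from the definition $(L_{\delta}f)(t)=f(t-\delta)$:
\begin{align*}
L_{\delta}(f_1*f_2)(t)=(f_1*f_2)(t-\delta)=\int^{t-\delta}_0 f_1(\tau)f_2(t-\delta-\tau)\,d\tau.
\end{align*}
Substituting $u=\tau+\delta$ turns this into $\int^t_{\delta} f_1(u-\delta)f_2(t-u)\,du$. The key step is to lower the limit from $\delta$ back to $0$: since every $f\in L^1_{\text{loc}}(\R^+)$ vanishes for negative argument, $f_1(u-\delta)=0$ on $0\le u<\delta$, so the integral equals $\int^t_0 (L_{\delta}f_1)(u)f_2(t-u)\,du=\big((L_{\delta}f_1)*f_2\big)(t)$, which is the first claimed identity. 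The second identity, with $f_1*(L_{\delta}f_2)$, then follows at once from commutativity $f_1*f_2=f_2*f_1$, exactly as in the unit case.

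An alternative I would keep in reserve is to deduce the statement from the already-proved unit-shift lemma by conjugating with the scaling operator. Using Lemma \ref{lem6}, $L_{\delta}=S_{\delta}LS^{-1}_{\delta}$, together with the scaling covariance of convolution $S_{\delta}(f_1*f_2)=(S_{\delta}f_1)*(S_{\delta}f_2)$ (a one-line change of variables), one writes $L_{\delta}(f_1*f_2)=S_{\delta}L\big((S_{1/\delta}f_1)*(S_{1/\delta}f_2)\big)$, applies the unit-shift lemma to move $L$ onto the first factor, and then collapses $S_{\delta}S_{1/\delta}=I$ and $S_{\delta}LS_{1/\delta}=L_{\delta}$. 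A third, even shorter route works on the transform side: since the multiplier of $L_{\delta}$ is $e^{-\delta s}$ (as in \eqref{eq14} with $1$ replaced by $\delta$), one has $\widehat{L_{\delta}(f_1*f_2)}(s)=e^{-\delta s}\hat f_1(s)\hat f_2(s)=\widehat{(L_{\delta}f_1)*f_2}(s)$, and uniqueness of the Laplace transform closes the argument.

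There is essentially no hard obstacle here; the only points needing care are (i) confirming that the convolutions are well defined and finite on each $[0,T]$ for $L^1_{\text{loc}}(\R^+)$ inputs, which is the same local-integrability estimate used in the preceding applications, and (ii) invoking the support property $f_1(u-\delta)=0$ for $u<\delta$ that justifies resetting the lower limit of integration — the step that lets the shift pass cleanly through the convolution.
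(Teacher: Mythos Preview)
Your primary argument---expand $(f_1*f_2)(t-\delta)$, substitute $u=\tau+\delta$, extend the lower limit using the support condition $f_1(u-\delta)=0$ for $0\le u<\delta$, identify $(L_\delta f_1)*f_2$, then invoke commutativity---is exactly the paper's proof, line for line. The two alternative routes you mention (conjugation via $L_\delta=S_\delta L S_{1/\delta}$ and the Laplace-multiplier argument) are not used in the paper but are valid as well.
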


\begin{proof}
We have
\begin{align*}
    L_{\delta}(f_1*f_2)(t) &= (f_1*f_2)(t-\delta)\\
    &= \int^{t-\delta}_0 f_1(\tau)f_2(t-\delta-\tau)d\tau\\
    &=\int^t_{\delta}f_1(u-\delta)f_2(t-u)\,du\quad u=\delta+\tau\\
    &=\int^t_0 f_1(u-\delta)f_2(t-u)\,du\quad\text{as $f_1(u-\delta)=0$ if $0\le t<\delta$} \\
    &= (L_{\delta}f)*f_2 (t)
\end{align*}
and as before.
\end{proof}
We now try to simplify (\ref{eq48}) stated earlier by utilising the operators defined in Lemma \ref{lem7}
\begin{align}
    h_{\delta}&= S_{\delta}h\notag\\
    h_{\delta}*f &= (S_{\delta}h)*f
\end{align}

\begin{lem}
Let $f\in L^1(\R^+)$,

\begin{align}
    \widehat{S_{\delta}f}(s) = \hat{f}(s\delta)
\end{align}
\end{lem}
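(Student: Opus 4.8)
The plan is to verify the identity by direct evaluation of the defining Laplace integral, since $\widehat{S_{\delta}f}$ is nothing more than the transform of a time-rescaled copy of $f$. First I would unfold the scaling operator using its definition in (\ref{eq36}), writing
\begin{align*}
\widehat{S_{\delta}f}(s) = \int^{\infty}_0 e^{-st}\,(S_{\delta}f)(t)\,dt = \int^{\infty}_0 e^{-st}\,\frac{1}{\delta}\,f\!\left(\frac{t}{\delta}\right)\,dt.
\end{align*}

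Next I would introduce the substitution $u = t/\delta$, so that $t = \delta u$ and $dt = \delta\,du$, with the limits $t=0$ and $t\to\infty$ mapping to $u=0$ and $u\to\infty$ respectively (using $\delta>0$). Under this change of variables the prefactor $1/\delta$ and the Jacobian $\delta$ cancel exactly, leaving
\begin{align*}
\widehat{S_{\delta}f}(s) = \int^{\infty}_0 e^{-s\delta u}\,f(u)\,du = \hat{f}(s\delta),
\end{align*}
which is precisely the claimed formula.

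The computation is entirely routine, so there is no genuine obstacle here; the only point deserving a word of justification is convergence of the integral. Since $f\in L^1(\R^+)$, the transform $\hat{f}$ is finite for every $s>0$, and because $\delta>0$ the argument $s\delta$ is again positive, so $\hat{f}(s\delta)$ is well defined and the substitution above is legitimate. This lemma is simply the Laplace-transform analogue of the time-scaling relations already used in the paper (compare Lemma \ref{lem6} and Lemma \ref{lem7}, where $h_{\delta}=S_{\delta}h$), and it fits neatly into the multiplier framework: just as $\widehat{L f}(s)=e^{-s}\hat{f}(s)$ encodes translation, this identity records how $S_{\delta}$ acts on the transform side by dilating the Laplace variable.
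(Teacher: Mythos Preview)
Your proof is correct and follows exactly the same route as the paper: unfold the definition of $S_{\delta}f$, substitute $u=t/\delta$ (equivalently $t=\delta u$), cancel the $1/\delta$ against the Jacobian, and recognise the resulting integral as $\hat f(s\delta)$. The only addition beyond the paper's argument is your brief remark on convergence, which is harmless extra commentary.
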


\begin{proof}
We have for $s>0$,

\begin{align*}
    \widehat{S_{\delta}f}(s)&=\int^{\infty}_0\frac{1}{\delta}f\left(\frac{t}{\delta}\right)e^{-st}\,dt\\
    &=\int^{\infty}_0\frac{1}{\delta}f(u)e^{-s\delta u}\delta\,du\quad t=\delta u\\
    &=\hat{f}(s\delta)
\end{align*}
\end{proof}

\section{Error Analysis}
We note that the results in (\ref{eq6}) and (\ref{eq7}). If $f\in L^1(\R^+)$, then\footnote{See proof in appendix.}

\begin{align}\label{eq53}
    \|h*f-h_{\delta}*f\|_1&\le \frac{\|f\|_1}{(1-k)^2}\|g-g_a\|_1
\end{align}

\begin{align}\label{eq54}
    \|h*f-h_{\delta}*f\|_{\infty}&\le \frac{\|f\|_1}{(1-k)^2}\|g-g_a\|_{\infty}
\end{align}
if $f\in L^1_{\text{loc}}(\R^+)$. Then (\ref{eq54}) becomes
\begin{align}\label{eq55}
 &\   \sup_{0\le t\le T}|h* f(t)-h_{\delta}*f(t)|\notag\\
 \le &\ \frac{\int^T_0|f(t)|dt}{1-k}\|g-g_a\|_{\infty}
\end{align}
which was proved in earlier notes.

\section{Examples: Power Law and Rayleigh Kernel}
\begin{eg}
    Let 
\begin{align}
    g(t) &=
\begin{cases}
    \frac{k\theta c^{\theta}}{(c+t)^{1+\theta}}&\text{if } t\ge 0\notag\\
    0 &\text{if }t<0
\end{cases}
\end{align}
 
\end{eg}

We set 
\begin{align*}
  \beta_j = g(j\delta)  \qquad j=0,1,\cdots
\end{align*}
\begin{figure}[h!] \includegraphics[width=80mm]{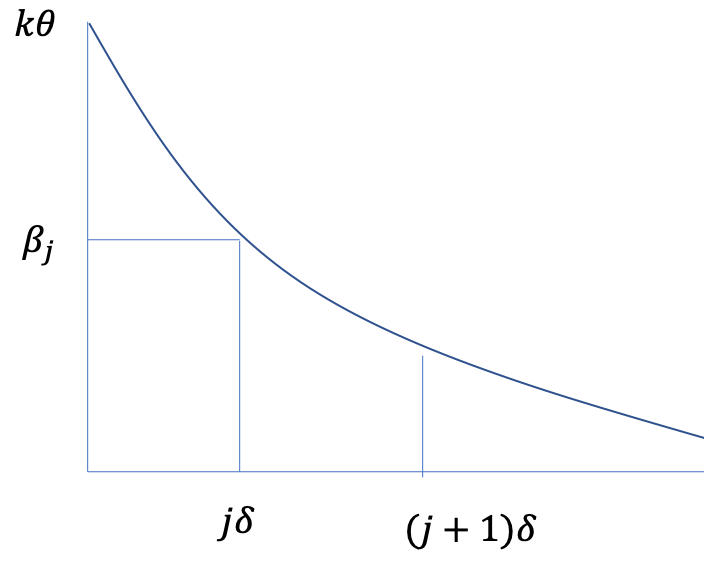}
\end{figure}    
then for $j\delta\le t<(j+1)\delta$

\begin{align*}
&\    |g(t)-g(j\delta)|\\
\le &\ |t-j\delta|\max_{j\delta\le t\le (j+1)\delta}|g'(t)|
\end{align*}

\begin{align*}
    g'(t)=\frac{-k\theta(1+\theta)}{(c+t)^{2+\theta}}
\end{align*}
and so $|g'(t)|\le k\theta (1+\theta)$ for all $t\ge 0$.

So 
\begin{align*}
 &\   |g(t)-g(j\delta)|\\
    \le &\ |t-j\delta|k\theta(1+\theta)\\
    \le &\ \delta k\theta (1+\theta) 
\end{align*}
So $g_{\delta}(t)$ with this choice of $\beta_j$ satisfies

\begin{align}\label{eq57}
    \|g-g_{\delta}\|_{\infty}&\le k\theta (1+\theta)\delta
\end{align}
So
\begin{align}\label{eq58}
    \|h-h_{\delta}\|_{\infty}&\le \frac{k\theta (1+\theta)\delta}{1-k}
\end{align}
and for $t\le T$.

\begin{align}\label{59}
    |h*f(t)-h_{\delta}*f(t)|&\le \frac{k\theta(1+\theta)\delta}{1-k}\int^T_0|f(t)|\,dt
\end{align}

\begin{eg}
    \begin{align}
        g(t)=\frac{kt}{\sigma^2}e^{-\frac{t^2}{2\sigma^2}}
    \end{align}
    
\begin{figure}[h!] \includegraphics[width=100mm]{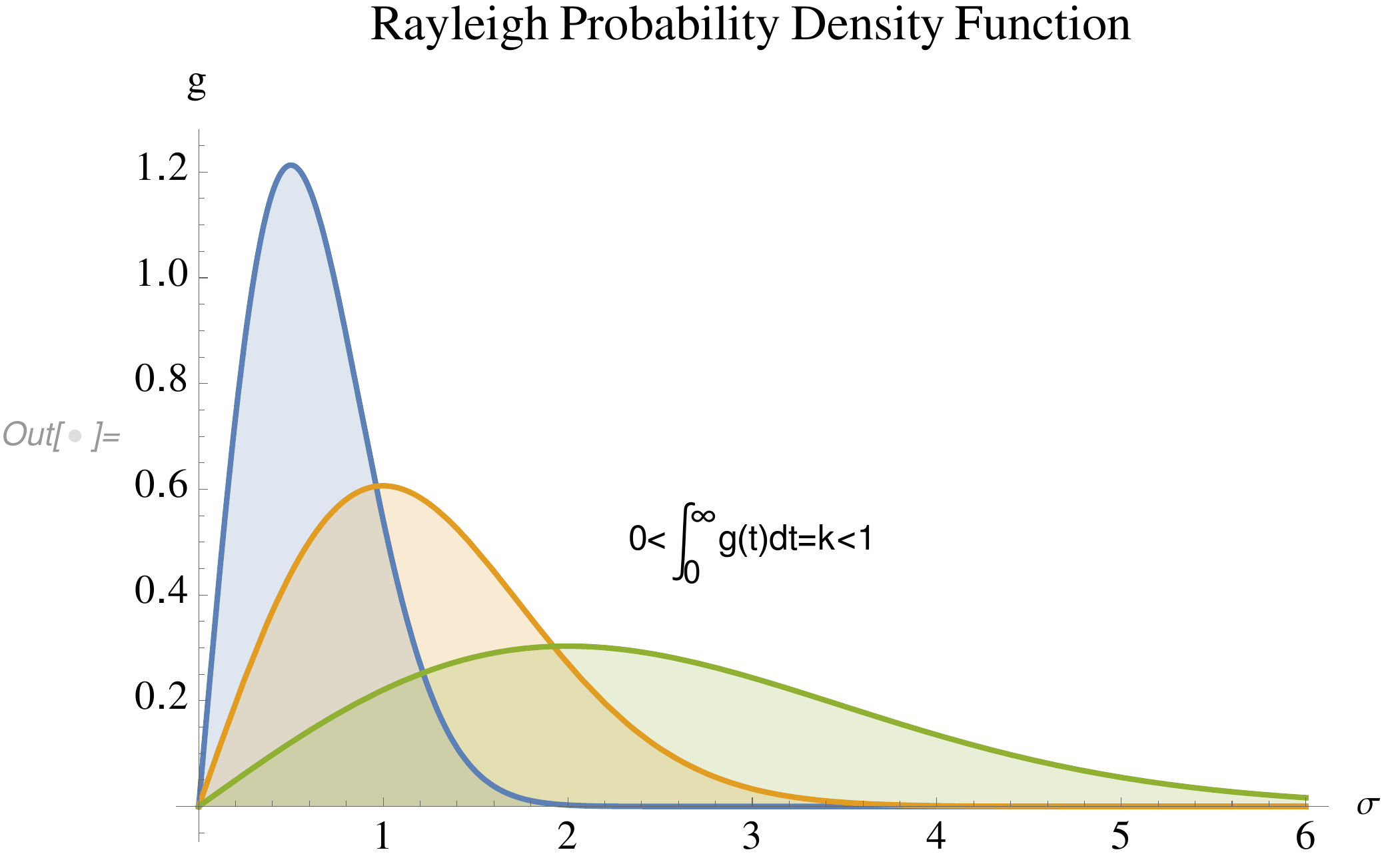}
\end{figure}    
    \begin{align*}
        g'(t)=\frac{k}{\sigma^2}[1-\frac{t^2}{\sigma^2}]e^{-\frac{t^2}{2\sigma^2}}
    \end{align*}
So
\begin{align*}
    |g'(t)|&\le \frac{k}{\sigma^2}\frac{1+t^2/\sigma^2}{e^{t^2/2\sigma^2}}\\
    &\le \frac{k}{\sigma^2}\frac{1+t^2/\sigma^2}{1+t^2/2\sigma^2}\\
    &\frac{2k}{\sigma^2}\frac{1+t^2/\sigma^2}{2+t^2/2\sigma^2}\le \frac{2k}{\sigma^2}
\end{align*}
Again choose 
\begin{align}\label{61}
    \beta_j = g(j\delta),\quad j=0,1,\cdots
\end{align}
and then 
\begin{align}\label{62}
    |g(t)-g_{\delta}(t)|\le \frac{2k}{\sigma^2}\cdot\delta
\end{align}
and the error analysis follows the same argument before.
\end{eg}

\begin{eg}
    Let
\begin{align}
    g(t)&=
\begin{cases}
    k &\text{if }0\le k<1\\
    0\quad &\text{otherwise}
\end{cases}
\end{align}
Thus this is the case where $a_0=k$, $a_j=0$ for $j\ge 1$, so $\beta^n_r=0$ if $r\ge 1$ and $\beta^n_0=k^n$

\begin{align}\label{eq66}
    h(t)=\sum^{\infty}_{n=0}k^n \gamma_n(t)
\end{align}
as $\gamma_n(t)=0$ for $t\ge n$.  We only need a finite number of terms in (\ref{eq66}) to calculate $h(t)$ for $t\le T$. In fact for $t\le T$

\begin{align*}
    h(t) = \sum^{[T]}_{n=0}k^n \gamma_n(t)
\end{align*}
\end{eg}

\section{Numerical implementation}

Compute $\beta^n_r$ coefficients for $r\le n-1$, all rest equal 0 for $n=1,2,\cdots$. Solve the function $\gamma_n(t)$ for each $n$ (MATLAB or C++) so that their values can be called. See equation (\ref{nconvrect}). If $k$ is small, we can drop out terms in (\ref{eq33}) with a small error that can be estimated. may lead to dropping further terms in calculation. No approximation is required, because for $0\le t\le T$ only a finite number of terms of the series are non zero.

\section{Future outlook}
We will implement our explicit algorithm into software and experiment with its behaviour. We will also experiment with real-world data by using our generative integral equation model to predict for arbitrary time point. (See the ODE version in \cite{chen2018}. This will contribute to not only the current scientific computing literature but also benefit the Machine Learning community. 
\section{Acknowledgments}
This material was motivated from a problem in computational social science. We thank Behavioral Data Science group, especially Dr Marian-Andrei Rizoiu in facilitating discussions and supporting us with research environment.

\section{Appendix}

We now prove (\ref{eq6}) and (\ref{eq7})
If we write (\ref{eq1}) as:
\begin{align*}
	h_i=g_i+g_i * h_i,\quad i=1,2
\end{align*}

\begin{align*}
h_1-h_2&=g_1-g_2+g_1*h_1-g_2*h_2\\
&=g_1-g_2+(g_1-g_2)*h_1+g_2*(h_1-h_2)
\end{align*}
So
\begin{align*}
	\|h_1-h_2\|&\le\|g_1-g_2\|+\|h_1\|\|g_1-g_2\|+\|g_2\|\|h_1-h_2\|
\end{align*}
So
\begin{align*}
	\|h_1-h_2\|&\le\frac{1}{1-\|g_2\|}\left(1+\|h_1\|\right)\|g_1-g_2\|\\
	&\le\frac{1}{1-\|g_2\|}\left(1+\frac{\|g_1\|}{1-\|g_1\|}\right)\|g_1-g_2\|\quad(4)\\
	&=\frac{\|g_1-g_2\|}{(1-\|g_1\|)(1-\|g_2\|)}\quad(5)
\end{align*}
where all norms are $L^1$ norms for which 
\begin{align*}
	\|\varphi*\psi\|_{L^1}&\le \|\varphi\|_{L^1} \|\psi\|_{L^1}
\end{align*}
If $g_1=g_2$ then $h_1=h_2$. This implies (\ref{eq1}) has an unique solution.

Also if $g_n\to g$ in $L^1$ then $h_n\to h$ in $L^1$. If we assume $\|g_n\|\le k<1$ for all $n\ge 1$, which leads to 
\begin{align*}\tag{6}
	\|h_n-h\|_{L^1}&\le \frac{1}{(1-k)^2}\|g_n-g\|_{L^2}
\end{align*}
(We do not want $\|g_n\|\to 1$ as $n\to\infty$)

We can also show convergence in $\text{sup}$ norms. Let us define
\begin{align*}
	\|\psi\|_t=\text{sup}_{0\le s\le t}|\psi(s)|
\end{align*}
Let $t>0$ and $s\in[0,t]$, then
\begin{align*}
\text{sup}_{0\le s\le t}h(s)&=\phi(s)+\int^s_0 h(\tau)g(s-\tau)\,d\tau\\
	&\le \|g\|_t+\|h\|_t\int^s_0 g(s-\tau)\,d\tau\\
	&\le \|g\|_t+\|h\|_t\int^t_0 g(\tau)\,d\tau\\
	&\le \|g\|_t+\|h\|_t\int^{\infty}_0 g(\tau)\,d\tau\\
	&\le \|g\|_t+k\|h\|_t
\end{align*}
as
\begin{align*}
\text{sup}_{0\le s\le t}h(s)&\le \|g\|_t
\end{align*}
So
\begin{align*}
	\|h\|_t&\le\|g\|_t+k\|h\|_t
\end{align*}
or
\begin{align*}
	\|h\|_t&\le\frac{1}{1-k}\|g\|_t
\end{align*}
and also
\begin{align*}\tag{7}
	\text{sup}_{t\ge 0} h(t)&\le\frac{1}{1-k}\text{sup}_{t\ge 0}g(t)
\end{align*}
Now we would like to derive (6) under $\text{sup}_t$, or $\|\cdot\|_t$ now.

For $0\le s\le t$, we have
\begin{align*}
	h_1(s)-h_2(s)=g_1(s)-g_2(s)+(g_1-g_2)*h_1(s)+g_2*(h_1-h_2)(s)
\end{align*}
Thus as before
\begin{align*}
	|h_1(s)-h_2(s)|&\le |g_1(s)-g_2(s)|+\|g_1-g_2\|_t\int^s_0 h_1(\tau)\,d\tau+\|h_1-h_2\|_t\int^s_0 g_2(\tau)\,d\tau\\
	&\le \|g_1-g_2\|_t(1+\|h_1\|_{L^1})
+\|g_1-g_2\|_t\|g_2\|_{L^1}
\end{align*}
and if $\|g_1\|$, $\|g_2\|\le k<1$, then
\begin{align*}
	1+\|h_1\|_{L^1}&\le 1+\frac{k}{1-k}=\frac{1}{1-k}
\end{align*}
and so 
\begin{align*}
	\|h_1-h_2\|_t&\le \|g_1-g_2\|_t\frac{1}{1-\|g_1\|_{L^1}}+\|h_1-h_2\|_t\|g_2\|_{L^1}
\end{align*}
and so
\begin{align*}\tag{8}
	\|h_1-h_2\|_t&\le\frac{\|g_1-g_2\|_t}{(1-\|g_1\|_{L^1})(1-\|g_2\|_{L^1})}
\end{align*}
and so
\begin{align*}\tag{9}
	\|h_1-h_2\|_t&\le\frac{1}{(1-k)^2}\|g_1-g_2\|_t
\end{align*}
and letting $t\to\infty$
\begin{align*}
	\|h_1-h_2\|_{\infty}&\le\frac{1}{(1-k)^2}\|g_1-g_2\|_{\infty}
\end{align*}
where $\|f\|_{\mathcal{D}}=\text{sup}_{t\ge 0}|f(t)|$
This implies that if $g_n\to g$ in sup norm then so does $h_n\to h$ in sup norm, provided $\|g_n\|\le k <1$ for all $n\ge 1$.

\subsection{Proofs of equations (\ref{eq53}) and (\ref{eq54})}

\begin{align*}
 &\   \|h*f-h_{\delta}*f\|\\
 \le &\ \|f\|_{L^1}\|h-h_{\delta}\|
\end{align*}
where $\|\cdot\|$ is either $L^1$ and $L^{\infty}$ norm. Then following the same arguments of proving (\ref{eq6}), one has
\begin{align*}
&\    \|h*f-h_{\delta}*f\|\\
\le &\ \frac{\|f\|_{L^1}}{(1-k)^2}\|g-g_{\delta}\|
\end{align*}
\bibliographystyle{plain}
\bibliography{hipper}

\begin{thebibliography}{1}

\bibitem{chen2018}
Ricky T.~Q. Chen, Yulia Rubanova, Jesse Bettencourt, and Duvenaud David.
\newblock Neural ordinary differential equations.
\newblock In {\em NIPS 2018}, 2018.

\bibitem{Daley:2003}
D.J. Daley and D.~Vere-Jones.
\newblock {\em An introduction to the theory of point processes. {V}ol. {I}}.
\newblock Probability and its Applications (New York). Springer-Verlag, New
  York, second edition, 2003.

\bibitem{dassios:2011}
Angelos Dassios and Hongbiao Zhao.
\newblock A dynamic contagion process.
\newblock {\em Adv. in Appl. Probab.}, 43(3):814--846, 09 2011.

\bibitem{Rizoiu2017}
Marian-Andrei Rizoiu, Lexing Xie, Scott Sanner, Manuel Cebrian, Honglin Yu, and
  Pascal {Van Hentenryck}.
\newblock {Expecting to be HIP: Hawkes Intensity Processes for Social Media
  Popularity}.
\newblock In {\em World Wide Web 2017, International Conference on}, pages
  1069--1078, Perth, Australia, 2017.

\end{thebibliography}
\end{document}